\newtheorem{theorem}{Theorem}[section]
\newtheorem{definition}[theorem]{Definition}
\newtheorem{proposition}[theorem]{Proposition}
\newtheorem{example}[theorem]{Example}
\newtheorem{examples}[theorem]{Examples}
\newtheorem{open question}[theorem]{Open Question}
\newtheorem{c/p}[theorem]{Conjecture/Proposition}
\newcommand{\ang}[1]{\left<#1\right>} 
\def\vint{\mathop{\mathchoice%
 {\setbox0\hbox{$\displaystyle\intop$}\kern 0.22\wd0%
 \vcenter{\hrule width 0.6\wd0}\kern -0.82\wd0}%
 {\setbox0\hbox{$\textstyle\intop$}\kern 0.2\wd0%
 \vcenter{\hrule width 0.6\wd0}\kern -0.8\wd0}%
 {\setbox0\hbox{$\scriptstyle\intop$}\kern 0.2\wd0%
 \vcenter{\hrule width 0.6\wd0}\kern -0.8\wd0}%
 {\setbox0\hbox{$\scriptscriptstyle\intop$}\kern 0.2\wd0%
 \vcenter{\hrule width 0.6\wd0}\kern -0.8\wd0}}%
 \mathopen{}\int}
\DeclareMathOperator\arctanh{arctanh}
\newcommand{\R}{\mathbb R}
\newcommand{\M}{\mathbb M}
\newcommand{\bS}{\mathbb S}
\newcommand{\be}{\beta}
\DeclareMathOperator{\Cut}{Cut}
\title{Brownian motions and heat kernel lower bounds on  K\"ahler and quaternion K\"ahler manifolds}
\author{ Fabrice Baudoin\footnote{Partly supported by the NSF grant DMS~1901315.}, Guang Yang}
\date{\today}
\begin{document}

\maketitle

\begin{abstract}
We study the radial parts of the Brownian motions on K\"ahler and quaternion K\"ahler manifolds. Thanks to sharp Laplacian comparison theorems, we deduce  as a consequence a sharp Cheeger-Yau type lower bound for the heat kernels of such manifolds and also sharp Cheng's type estimates for the Dirichlet eigenvalues of metric balls.  
\end{abstract}

\tableofcontents

\section{Introduction}

It is by now well established that on Riemannian manifolds the study of the radial parts of the Brownian motions allows to prove the  sharp Cheeger-Yau  lower bound \cite{MR615626} for the heat kernel, and as a consequence the sharp Cheng's estimate \cite{MR378001} for the eigenvalues of metric balls, see the paper \cite{ichihara} and the book \cite{MR1882015}. Those methods were then extended in the framework of RCD spaces in \cite{MR3950013} and adapted to sub-Riemannian manifolds in \cite{baudoin2020radial}. The goal of the present paper is to use similar probabilistic techniques to prove a sharp Cheeger-Yau  heat kernel lower bound on K\"ahler and quaternion K\"ahler manifolds. In K\"ahler manifold such techniques are available due to a recent Laplacian comparison theorem  proved Ni-Zheng \cite{MR3858834}. In  quaternion K\"ahler manifolds, we prove a sharp Laplacian comparison theorem that allows us to apply those techniques.  Concerning the sharp lower bounds for the heat kernels, our results are then the following.

In K\"ahler manifolds  we obtain:

\begin{theorem}[Cheeger-Yau estimate on K\"ahler manifolds, See Theorem \ref{comparison heat kernel}]
Let $\M$ be a K\"ahler manifold. Assume that $H \ge 4k$ and that $\mathrm{Ric}^\perp \ge (2m-2)k$ for some $ k \in \mathbb{R}$, where $H$ denotes the holomorphic sectional curvature and $\mathrm{Ric}^\perp$ the orthogonal Ricci curvature. Then, denoting by $p^R_t(x,y)$ the Dirichlet  heat kernel of $\mathbb{M}$ on a metric ball of radius $R>0$ one has for every $t >0$ and $x,y$ inside of the ball,
\[
p_t (x,y) \ge p_t^{k,R} ( 0, d(x,y)) 
\]
where  $p_t^{k,R} $ is the Dirichlet heat kernel of a metric ball of radius $R$ in the K\"ahler model of holomorphic sectional curvature  $4k$.
\end{theorem}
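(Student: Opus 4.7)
The plan is to run the classical probabilistic Cheeger--Yau comparison scheme, with the Ni--Zheng sharp Laplacian comparison theorem \cite{MR3858834} playing the role of the curvature input. Fix $y\in B_R$ (the natural case is $y$ the center of the ball; the general case reduces to this by domain monotonicity of the Dirichlet heat kernel). Set $r(x)=d(x,y)$ and let $\phi_{k,m}$ denote the radial Laplacian of the distance function on the K\"ahler model of complex dimension $m$ and holomorphic sectional curvature $4k$. Under the hypotheses $H\ge 4k$ and $\mathrm{Ric}^\perp\ge (2m-2)k$, Ni--Zheng's theorem yields, pointwise off the cut locus of $y$,
\[
\Delta r(x)\le \phi_{k,m}(r(x)).
\]

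With this inequality in hand, I would build a barrier from the model heat kernel. Let $u(t,\rho)=p_t^{k,R}(0,\rho)$, which solves $\partial_t u=\partial_\rho^2 u+\phi_{k,m}(\rho)\,\partial_\rho u$ with Dirichlet condition $u(t,R)=0$ and $\delta_0$ initial datum at $\rho=0$. A one-dimensional parabolic maximum-principle argument establishes the radial monotonicity $\partial_\rho u\le 0$. Setting $v(t,x)=u(t,r(x))$ on $B_R$, a direct computation off the cut locus gives
\[
\partial_t v-\Delta v \;=\; \partial_\rho u(t,r)\,\bigl(\phi_{k,m}(r)-\Delta r\bigr)\;\le\; 0,
\]
so $v$ is a subsolution with vanishing boundary value and the same $\delta_y$ initial datum as $p_t^R(\cdot,y)$, and the parabolic maximum principle gives $p_t^R(\cdot,y)\ge v$, which is the claimed inequality. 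The probabilistic route, emphasized in the paper, is equivalent: one runs the radial part $r_t=d(X_t,y)$ of Brownian motion via Kendall's It\^o formula $dr_t=d\beta_t+\tfrac12\Delta r(X_t)\,dt-dL_t$ (with $L_t\ge 0$ supported on the cut locus), couples $r_t$ by an SDE comparison theorem to the model radial diffusion $\tilde r_t$ (using the Laplacian inequality), so that the exit time of $X_t$ from $B_R$ stochastically dominates the exit time of $\tilde r_t$, and reads off the heat-kernel inequality from the Feynman--Kac representation.

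The two main technical obstacles are the cut locus and the model heat-kernel monotonicity. The distributional Laplacian of $r$ carries a nonpositive singular part on $\mathrm{Cut}(y)$ (equivalently, the $-dL_t$ term in Kendall's formula), and one must ensure this singular contribution enters with the sign favorable to the subsolution inequality; the standard remedy is to approximate $r$ by smooth Calabi-type upper barriers or to argue directly in the distributional sense as in the original Cheeger--Yau paper. The monotonicity $\partial_\rho u\le 0$ is not automatic because $\phi_{k,m}$ is non-monotone on the K\"ahler model; it requires a dedicated parabolic maximum-principle application to $\partial_\rho u$, exploiting the Dirichlet structure of the model ball to supply the correct sign on the boundary.
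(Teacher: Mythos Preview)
Your proposal is correct in spirit, but the route you emphasize is not the one the paper takes, and your sketch of the probabilistic alternative misses the paper's actual last step.

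The paper proceeds purely probabilistically. From the Ni--Zheng Laplacian comparison and Kendall's It\^o formula, an Ikeda--Watanabe type SDE comparison (quoted from Ichihara and Hsu) yields the pathwise/CDF inequality
\[
\mathbb{P}_{x_1}\{ d(x_0,X_t)<s,\ t\le \tau_R\}\ \ge\ \mathbb{P}_{x_1}\{ \rho^k_t<s,\ t\le \tau^k_R\},
\]
which rewrites as $\int_{B(x_0,s)} p^R(t,x_1,y)\,d\mu(y)\ge \int_0^s q_k^R(t,d(x_0,x_1),r)\,d\mu_k(r)$. The paper then divides both sides by the small--ball volume, uses the exact matching of asymptotics $\mu(B(x_0,s))\sim \frac{\pi^m}{m!}s^{2m}\sim \mu_k([0,s])$ as $s\to 0$, and applies Lebesgue differentiation together with the symmetry of the heat kernels. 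This localizes the inequality at the center $x_0$ and gives $p^R(t,x_0,x_1)\ge q_k^R(t,0,d(x_0,x_1))$ directly, \emph{without ever needing the radial monotonicity $\partial_\rho u\le 0$} that your barrier argument hinges on. Your probabilistic sketch only invokes the exit-time ordering and a vague ``Feynman--Kac'' reading; the Lebesgue-differentiation step is what actually converts the CDF comparison into a pointwise heat-kernel bound.

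Your PDE barrier approach is the classical Cheeger--Yau scheme and would also work, but at the cost of proving $\partial_\rho p_t^{k,R}(0,\rho)\le 0$ for the K\"ahler model, which you correctly flag as nontrivial since the drift $\phi_{k,m}$ is not monotone. The paper's device (small-ball volume matching plus Lebesgue differentiation) sidesteps this entirely. A minor point: the paper's Theorem~\ref{comparison heat kernel} is stated with one argument equal to the center $x_0$; your ``reduction to $y$ equal to the center by domain monotonicity'' would only yield a bound by $p_t^{k,R'}(0,d(x,y))$ for a smaller $R'$, not $p_t^{k,R}$, so it does not recover a genuinely two-point version.
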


The K\"ahler model for $k=0$ is the complex flat space $\mathbb{C}^m$, for $k=1$ it is the complex projective space $\mathbb{C}P^m$ and for $k=-1$, it is the complex hyperbolic space $\mathbb{C}H^m$.


In quaternion K\"ahler manifolds, we obtain:

\begin{theorem}[Cheeger-Yau estimate on quaternion K\"ahler manifolds, See Theorem \ref{comparison heat kernel 2}]
Let $\M$ be a quaternion K\"ahler manifold. Assume that  $Q \ge 12k$ and that $\mathrm{Ric}^\perp \ge (4m-4)k$ for some $ k \in \mathbb{R}$, where $Q$ denotes the quaternionic sectional curvature and $\mathrm{Ric}^\perp$ the orthogonal Ricci curvature. Then, denoting by $p^R_t(x,y)$ the Dirichlet  heat kernel of $\mathbb{M}$ on a metric ball of radius $R>0$ one has for every $t >0$ and $x,y$ inside of the ball,
\[
p_t (x,y) \ge q_t^{k,R} ( 0, d(x,y)) 
\]
where  $q_t^{k,R} $ is the Dirichlet heat kernel of a metric ball of radius $R$ in the quaternion K\"ahler model of quaternionic sectional curvature  $12k$.
\end{theorem}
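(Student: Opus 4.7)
The plan is to mirror the K\"ahler proof of Theorem 1.1 using the sharp quaternion K\"ahler Laplacian comparison (established earlier in the paper) in place of the Ni--Zheng estimate of \cite{MR3858834}. Three ingredients are needed: (i) the sharp Laplacian comparison for the distance function, (ii) an It\^o-type comparison between the radial Brownian process on $\M$ and the radial Brownian process on the quaternion K\"ahler model, and (iii) the stochastic representation of the Dirichlet heat kernel through killed Brownian motion, as in the classical Riemannian argument of Ichihara \cite{ichihara}.

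First, fix $x\in\M$ and let $r(\cdot)=d(x,\cdot)$. Under the hypotheses $Q\ge 12k$ and $\mathrm{Ric}^{\perp}\ge (4m-4)k$, the sharp Laplacian comparison produces
\[
\Delta r(y)\le \Delta^{k,m} r(y),\qquad y\notin \mathrm{Cut}(x),
\]
where $\Delta^{k,m}$ denotes the radial Laplacian on the quaternion K\"ahler model of quaternionic sectional curvature $12k$. The two curvature hypotheses enter naturally via the splitting of the orthogonal complement to the minimizing geodesic from $x$ to $y$ into its $3$-dimensional quaternionic span (where $Q$ controls the Hessian of $r$) and its $(4m-4)$-dimensional orthogonal complement (where $\mathrm{Ric}^\perp$ does), each handled by a Riccati comparison. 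Applying It\^o's formula with the Kendall--Cranston treatment of $\mathrm{Cut}(x)$ then gives, for $X_t$ a Brownian motion on $\M$ started at $y\in B_R(x)$,
\[
dr(X_t)=d\beta_t+\tfrac{1}{2}\,\Delta r(X_t)\,dt-dL_t,
\]
with $\beta_t$ a standard one-dimensional Brownian motion and $L_t$ a nondecreasing correction with the right sign. Combining this with the Laplacian comparison and an Ikeda--Watanabe stochastic domination produces a pathwise inequality $r(X_t)\le\rho_t$, where $\rho_t$ is the radial Brownian motion on the quaternion K\"ahler model started at $d(x,y)$.

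Finally, this pathwise domination transfers to first exit times: $\tau_R=\inf\{t:X_t\notin B_R(x)\}$ stochastically dominates the first exit time of $\rho_t$ from $[0,R]$. Testing the Dirichlet representation
\[
\int_\M p_t^R(y,z)\,f(z)\,dV(z) = \mathbb{E}_y\!\left[f(X_t)\,\mathbf{1}_{\{t<\tau_R\}}\right]
\]
against nonincreasing, radial (about $x$) approximations of a delta mass yields the pointwise inequality $p_t^R(y,z)\ge q_t^{k,R}(0,d(y,z))$, as claimed. The principal obstacle is the Laplacian comparison itself, requiring a quaternionic Jacobi-field analysis adapted to the anisotropic curvature bounds $Q\ge 12k$ and $\mathrm{Ric}^\perp\ge (4m-4)k$; once it is in hand, the stochastic-domination and Dirichlet-representation steps run in parallel with the K\"ahler argument and Ichihara's Riemannian scheme essentially unchanged.
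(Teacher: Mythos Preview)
Your proposal is correct and follows essentially the same route as the paper: the sharp quaternionic Laplacian comparison (Theorem~\ref{comparison quaternion}) feeds into Kendall's It\^o formula for the radial process, Ichihara-type stochastic domination yields the distributional inequality of Theorem~\ref{comparison kahler  1d} (in its quaternionic analogue), and the heat-kernel lower bound is then extracted by shrinking balls to a point---the paper does this via the Lebesgue differentiation theorem rather than delta-mass approximations, but the two are equivalent. One cosmetic point: in the paper's normalization the Brownian motion has generator $\Delta$ (not $\tfrac12\Delta$), so the It\^o formula \eqref{eq:Ito-radial} carries no factor of $\tfrac12$, and the final inequality is stated only with one argument equal to the center $x_0$ of the ball (your last display with general $y,z$ slightly overstates what the argument gives).
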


The quaternion K\"ahler model for $k=0$ is the quaternionic flat space $\mathbb{H}^m$, for $k=1$ it is the quaternionic projective space $\mathbb{H}P^m$ and for $k=-1$, it is the quaternionic hyperbolic space $\mathbb{H}H^m$.

We note that since K\"ahler or quaternionic K\"ahler manifolds are Riemannian manifolds, the classical Cheeger-Yau lower bound \cite{MR615626} is available. However the Riemannian model spaces spheres and hyperbolic spaces are not K\"ahler or quaternionic K\"ahler models (except for $m=1$), therefore the two above theorems are sharper.

\

The paper is organized as follows. In section 2, we introduce the basic definitions and notations used throughout the paper. We also study the Brownian motions on the K\"ahler and quaternion K\"ahler models. Such study is important, since those Brownian motions provide the model processes with respect to which we aim to develop a comparison theory. In particular,   the radial parts of those Brownian motions are one-dimensional diffusions whose generators can explicitly be computed. A summary of those generators is given in section \ref{summary}. In section 3 we establish sharp Laplacian comparison theorems on K\"ahler and quaternionic K\"ahler manifolds. The K\"ahler case is known and due to Ni-Zheng \cite{MR3858834}. We give a slightly different and self-contained proof which is easy to adapt to the quaternion K\"ahler case. The quaternion K\"ahler case is new. Both of those Laplacian comparison theorems are sharp in the sense that we obtain an equality for the model spaces. Section 4 is devoted to the proof of the comparison theorems. Using the approach  by Ichihara \cite{ichihara} we prove, thanks to the results proved in the previous sections, the sharp Cheeger-Yau lower bounds for the heat kernels. As an easy consequence we deduce a sharp Cheng's type estimate for the first eigenvalue of metric balls.


\section{Brownian motion on K\"ahler and quaternion K\"ahler model manifolds}

In this section we fix notations and give some reminders about K\"ahler and quaternion K\"ahler manifolds and study the Brownian motions on the model spaces of those geometries. Brownian motions on  K\"ahler models and quaternion K\"ahler models have already been studied in disparate places in the literature, so that the present  section is essentially a survey of known results. However, our goal is a unified  presentation which has interest on its own. We refer to \cite{BDW2, BDW1,MR3719061} and the references therein for further details.

\subsection{Basic definitions}

K\"ahler and quaternion K\"ahler manifolds are Riemannian manifolds equipped with some  invariant  $(1,1)$ tensors preserving the metric and inducing a complex or quaternionic structure.  In this paper, we will take the point of view of real Riemannian geometry to study those structures. A detailed presentation of this viewpoint about K\"ahler and quaternion K\"ahler manifolds is given in Chapter 2 and Chapter 14 of the book by Besse \cite{besse} to which we refer for further references.

Throughout the paper, let $(\M,g)$ be a smooth complete Riemannian manifold. Denote by $\nabla$ the Levi-Civita connection on $\M$. 

\subsubsection{K\"ahler manifolds}

\begin{definition}
The manifold $(\M,g)$ is called a K\"ahler manifold, if there exists a smooth  $(1,1)$ tensor $J$ on $\M$ that satisfies:
\begin{itemize}
\item For every $x \in \M$, and $X,Y \in T_x\M$, $g_x(J_x X,Y)=-g_x(X,J_xY)$;
\item For every $x \in \M$, $J_x^2=-\mathbf{Id}_{T_x\M} $;
\item $\nabla J$=0.
\end{itemize}
The map $J$ is called a complex structure.
\end{definition}

On K\"ahler manifolds, we will be considering the following type of curvatures. Let
\[
R(X,Y,Z,W)=g ( (\nabla_X \nabla_Y -\nabla_Y \nabla_X -\nabla_{[X,Y]} )Z, W)
\]
be the Riemannian curvature tensor of $(\M,g)$. The holomorphic sectional curvature of the K\"ahler manifold $(\M,g,J)$ is defined as
\[
H(X)=\frac{R(X,JX,JX,X)}{g(X,X)^2}.
\]

The orthogonal Ricci curvature (see \cite{MR3996490}) of the K\"ahler manifold $(\M,g,J)$ is defined for a vector field  $X$ such that $g(X,X)=1$ by
\[
\mathrm{Ric}^\perp (X,X)=\mathrm{Ric} (X,X)-H(X),
\]
where $\mathrm{Ric}$ is the usual Riemannian Ricci tensor of $(\M,g)$.

\subsubsection{Quaternion K\"ahler manifolds}

In the paper we shall use the following definition of quaternion K\"ahler manifold, see Chapter 14 in \cite{besse}.

\begin{definition}
The manifold $(\M,g)$ is called a  quaternion K\"ahler manifold, if   there exists a covering of $\M$ by open sets $U_i$ and, for each $i$,  3 smooth  $(1,1)$ tensors $I,J,K$ on $U_i$ such that:

\begin{itemize}
\item For every $x \in U_i$, and $X,Y \in T_x\M$, $g_x(I_x X,Y)=-g_x(X,I_xY)$,  $g_x(J_x X,Y)=-g_x(X,J_xY)$, $g_x(K_x X,Y)=-g_x(X,K_xY)$ ;
\item For every $x \in U_i$, $I_x^2=J_x^2=K_x^2=I_xJ_xK_x=-\mathbf{Id}_{T_x\M} $;
\item For every $x \in U_i$, and $X\in T_x\M$  $\nabla_X I, \nabla_X J, \nabla_X K \in \mathbf{span} \{ I,J,K\}$;
\item For every $x \in U_i \cap U_j$, the vector space of endomorphisms of $T_x\M$ generated by $I_x,J_x,K_x$ is the same for $i$ and $j$.
\end{itemize}
\end{definition}

It is worth noting that in some cases like the quaternionic projective spaces for topological reasons the tensors $I,J,K$ may not be defined globally. However $\mathbf{span} \{ I,J,K\}$ may always be defined globally according to the last bullet point.

On quaternion K\"ahler manifolds, we will be considering the following curvatures. As above, let
\[
R(X,Y,Z,W)=g ( (\nabla_X \nabla_Y -\nabla_Y \nabla_X -\nabla_{[X,Y]} )Z, W)
\]
be the Riemannian curvature tensor of $(\M,g)$. We define the quaternionic sectional  curvature of the quaternionic K\"ahler manifold $(\M,g,J)$ as
\[
Q(X)=\frac{R(X,IX,IX,X)+R(X,JX,JX,X)+R(X,KX,KX,X)}{g(X,X)^2}.
\]

We define the orthogonal Ricci curvature  of the quaternionic K\"ahler manifold $(\M,g,I,J,K)$  for a vector field  $X$ such that $g(X,X)=1$ by
\[
\mathrm{Ric}^\perp (X,X)=\mathrm{Ric} (X,X)-Q(X),
\]
where $\mathrm{Ric}$ is the usual Riemannian Ricci tensor of $(\M,g)$.

\subsection{Model spaces and their Brownian motions}

The constant curvature model spaces of Riemannian geometry are the Euclidean spaces, the spheres and the hyperbolic spaces. Euclidean spaces are K\"ahler if the dimension is even and quaternion K\"ahler if the dimension is a multiple of 4. The only spheres and  hyperbolic spaces which are K\"ahler are the two dimensional ones. The only spheres and  hyperbolic spaces which are quaternion K\"ahler are the four dimensional ones.  In order to develop a comparison geometry for the Brownian motion in higher dimensional K\"ahler or quaternion K\"ahler geometry, one therefore needs to first study the Brownian motion on the models of those geometries. In this section, we review the K\"ahler and quaternion K\"ahler model spaces and their Brownian motions. All of those model spaces are rank one Riemannian symmetric spaces; As such, see \cite{B02},  the radial parts of the Brownian motions are diffusion processes.

\subsubsection{K\"ahler models}

\textbf{Flat model.}  The flat model of a K\"ahler manifold is 
\[
\mathbb{C}^m =\left\{ (z_1,\cdots,z_m), \, z_1,\cdots , z_m \in \mathbb{C} \right\}
\]
 equipped with its standard Hermitian inner product. The complex structure $J$ in that case is  just the component-wise multiplication by $i$. The Brownian motion $(W_t)_{t \ge 0}$ on  $\mathbb{C}^m$ is the diffusion process associated with the Laplace operator
\[
\Delta_{\mathbb{C}^m}=4\sum_{i=1}^m \frac{\partial^2}{\partial z_i \partial \bar{z}_i }  =\sum_{i=1}^m   \frac{\partial^2}{\partial^2 x_i} +\frac{\partial^2}{\partial^2 y_i} 
\]
where $x_i$ is the real part of $z_i$, $y_i$ its imaginary part and
\[
\frac{\partial }{\partial z_i }= \frac{1}{2} \left(  \frac{\partial }{\partial x_i } -i \frac{\partial }{\partial y_i }\right) , \quad \frac{\partial }{\partial \bar{z}_i }= \frac{1}{2} \left(  \frac{\partial }{\partial x_i } +i \frac{\partial }{\partial y_i }\right).
\]
One has
\[
W_t =\left( Z^1_t,\cdots,Z^m \right),
\]
where the $Z^i$'s are independent complex Brownian motions on $\mathbb C$. The radial part of $W$ defined by 
\[
r_t=| W_t|=\sqrt{ \sum_{i=1}^m | Z^i_t|^2}
\]
is itself a diffusion process with Bessel generator
\[
L_{\mathbb{C}^m}=\frac{\partial^2}{\partial r^2} + \frac{2m-1}{r} \frac{\partial}{\partial r} .
\]
We note that the radial part of the Lebesgue measure on $\mathbb{C}^m$ then writes:
\[
d\mu_{\mathbb{C}^m} = 2 \frac{\pi^m}{(m-1)!} \, r^{2m-1} dr, \quad r \ge 0.
\]

\

\textbf{Positively curved model.} The positively curved model of a K\"ahler manifold is the complex projective space $\mathbb CP^m$. It can be constructed as follows. Consider the unit sphere
\[
\bS^{2m+1}=\lbrace z=(z_1,\cdots,z_{m+1})\in \mathbb{C}^{m+1}, \| z \| =1\rbrace.
\]
There is an isometric group action of $\mathbb{S}^1=\mathbf{U}(1)$ on $\bS^{2m+1}$ which is  defined by $$e^{i\theta}\cdot(z_1,\cdots, z_{m+1}) = (e^{i\theta} z_1,\cdots, e^{i\theta} z_{m+1}). $$

The quotient space $\bS^{2m+1} / \mathbf{U}(1)$ is defined as $\mathbb{C}P^m$ and the projection map $\pi :  \bS^{2m+1} \to \mathbb{C}P^m$ is a Riemannian submersion with totally geodesic fibers. The K\"ahler structure on  $\mathbb{C}P^m$ is inherited from the one in $\mathbb{C}^{m+1}$ through this construction.

 To parametrize points in $\mathbb{C}P^m \setminus \{ \infty \}$, it is convenient to  use the local inhomogeneous coordinates given by $w_j=z_j/z_{m+1}$, $1 \le j \le m$, $z \in \mathbb{C}^{n+1}$, $z_{m+1}\neq 0$. The point $ \infty $ on $\mathbb{C}P^m$ corresponds to $z_{m+1}= 0$.

 The submersion $\pi$ allows one to construct the Brownian motion on $\mathbb{C}P^m$ from the Brownian motion on $\bS^{2m+1}$. Indeed,  let $(Z_t)_{t \ge 0}$ be a Brownian motion on the Riemannian sphere  $\bS^{2m+1} \subset \mathbb{C}^{m+1}$ started at the north pole \footnote{We  call north pole the point with complex coordinates $z_1=0,\cdots, z_{m+1}=1$. }. Since $\mathbb{P}( \exists t \ge 0, Z^{m+1}(t)=0 )=0$, one can use the local description of the submersion $\pi$ in inhomogeneous coordinates to deduce that
\begin{align}\label{BMsphere}
W_t= \left( \frac{Z^1_t}{Z^{m+1}_t} , \cdots, \frac{Z^m_t}{Z^{m+1}_t}\right), \quad t \ge 0,
\end{align}
is a Brownian motion on $\mathbb{C}P^m$, i.e. is a diffusion process with generator 
\[
\Delta_{\mathbb{C}P^m}=4(1+|w|^2)\sum_{k=1}^m \frac{\partial^2}{\partial w_k \partial\overline{w_k}}+ 4(1+|w|^2)\mathcal{R} \overline{\mathcal{R}}
\]
where
\[
\mathcal{R}=\sum_{j=1}^m w_j \frac{\partial}{\partial w_j}.
\]

 The radial part of $W$ defined by 
\[
r_t=\arctan| W_t|=\arctan \sqrt{ \sum_{i=1}^m \frac{| Z^i_t|^2}{| Z^{m+1}_t|^2}}=\arctan \left( \frac{1}{| Z^{m+1}_t|}\sqrt{1- | Z^{m+1}_t|^2 }\right)
\]
is  a diffusion process with Jacobi generator
\[
L_{\mathbb{C}P^m}=\frac{\partial^2}{\partial r^2}+((2m-2)\cot r +2 \cot 2 r)\frac{\partial}{\partial r}  .
\]
We note that $L_{\mathbb{C}P^m}=\mathcal{L}^{m-1,0}$ where $\mathcal{L}^{m-1,0}$ is the operator studied in the appendix of \cite{MR3719061}. In particular, the spectrum of $\mathbb{C}P^m$ is given by:
\[
\mathrm{Sp} ( \mathbb{C}P^m)=\left\{ 4k (k+m), k \ge 1 \right\}.
\]
Finally, we note that the radial part of the Riemannian volume measure writes
\[
d\mu_{\mathbb{C}P^m}= \frac{\pi^m}{(m-1)!} \, (\sin r)^{2m-2} \sin (2r) \,dr, \quad  0 \le r \le \frac{\pi}{2}. 
\]


\

\textbf{Negatively curved model.}  The negatively curved model of a K\"ahler manifold is the complex projective space $\mathbb CH^m$.  It can be constructed as follows. Let us consider the complex hyperboloid
\[
\mathcal H^{2m+1}=\{ z \in \mathbb{C}^{m+1}, | z_1|^2+\cdots+|z_m|^2 -|z_{m+1}|^2=-1 \} \subset \mathbb{C}^{m+1} .
\]

The group $\mathbf{U}(1)$ acts isometrically on $\mathcal H^{2m+1}$. The quotient space of $\mathcal H^{2m+1}$ by  this action is defined to be $\mathbb{C}H^m$ and the projection map $\pi :  \mathcal H^{2m+1} \to \mathbb CH^m$ is a Riemannian submersion with totally geodesic fibers. Thus, as a differential manifold, the  complex hyperbolic space $\mathbb{C}H^m$ is simply the open unit ball in $\mathbb{C}^m$ with a Riemannian metric inherited from the previous submersion. The K\"ahler structure on  $\mathbb{C}H^m$ is inherited from the one in $\mathbb{C}^{m+1}$ through the above construction.

To parametrize $\mathbb{C}H^m$, one can use the global inhomogeneous coordinates given by $w_j=z_j/z_{m+1}$ where $(z_1,\dots, z_{m+1})\in \mathcal H^{2m+1}$. In those coordinates the Laplace operator of $\mathbb{C}H^m$ can be written:

 \[
\Delta_{\mathbb{C}H^m}=4(1-|w|^2)\sum_{k=1}^m \frac{\partial^2}{\partial w_k \partial\overline{w_k}}+ 4(1-|w|^2)\mathcal{R} \overline{\mathcal{R}}
\]
where
\[
\mathcal{R}=\sum_{j=1}^m w_j \frac{\partial}{\partial w_j}.
\] 

The Brownian motion $(W_t)_{ t \ge 0}$ on $\mathbb{C}H^m$ is the diffusion with generator $\Delta_{\mathbb{C}H^m}$. As for the case of $\mathbb{C}P^m$, it may be represented in inhomogeneous coordinates as 
\[
W_t= \left( \frac{Z^1_t}{Z^{m+1}_t} , \cdots, \frac{Z^m_t}{Z^{m+1}_t}\right), \quad t \ge 0
\]
where $(Z^1_t,\cdots,Z^{m+1}_t)$ is a Brownian motion on $\mathcal H^{2m+1}$. The radial part of $W$ defined by 
\[
r_t=\arctanh | W_t|=\arctanh\sqrt{ \sum_{i=1}^m \frac{| Z^i_t|^2}{| Z^{m+1}_t|^2}}= \arctanh \left( \frac{1}{| Z^{m+1}_t|}\sqrt{| Z^{m+1}_t|^2 -1 }\right)
\]
is  a diffusion process with hyperbolic Jacobi generator

\[
L_{\mathbb{C}H^m}= \frac{\partial^2}{\partial r^2}+((2m-2)\coth r+2  \coth 2r)\frac{\partial}{\partial r}
 \]
 
 Finally, we note that the radial part of the Riemannian volume measure writes
\[
d\mu_{\mathbb{C}H^m}= \frac{\pi^m}{(m-1)!} \, (\sinh r)^{2m-2} \sinh (2r) \,dr, \quad  r \ge 0. 
\]

  
\subsubsection{Quaternion K\"ahler models}

\textbf{Flat model.} Let $\mathbb{H}$ be the non-commutative field of quaternions
\[
\mathbb{H}=\{q=t+xI+yJ+zK, (t,x,y,z)\in\R^4\},
\]
where  $I,J,K$  satisfy $I^2=J^2=K^2=IJK=-1$.  For $q=t+xI+yJ+zK \in \mathbb{H}$, we denote by $\overline q= t -xI-yJ-zK$ its conjugate, $|q|^2=t^2+x^2+y^2+z^2$ its squared norm and $\mathrm{Im}(q)=(x,y,z) \in \mathbb{R}^3$ its imaginary part.

The quaternionic structure $I,J,K$ in that case is  the component-wise multiplication by $I,J,K$ respectively. The Brownian motion $(W_t)_{t \ge 0}$ on  $\mathbb{H}^m$ is the diffusion process associated with the Laplace operator
\[
\Delta_{\mathbb{H}^m}=\sum_{i=1}^m \frac{\partial^2}{\partial t^2_i} +   \frac{\partial^2}{\partial x^2_i} +\frac{\partial^2}{\partial y^2_i} +\frac{\partial^2}{\partial z^2_i} 
\]
One can represent
\[
W_t =\left( Q^1_t,\cdots,Q^m \right),
\]
where the $Q^i$'s are independent complex Brownian motions on $\mathbb H$. The radial part of $W$ defined by 
\[
r_t=| W_t|=\sqrt{ \sum_{i=1}^m | Q^i_t|^2}
\]
is  a diffusion process with Bessel generator
\[
L_{\mathbb{H}^m}=\frac{\partial^2}{\partial r^2} + \frac{4m-1}{r} \frac{\partial}{\partial r} .
\]
We note that the radial part of the Lebesgue measure on $\mathbb{H}^m$ then writes:
\[
d\mu_{\mathbb{H}^m} = 2 \frac{\pi^{2m}}{(2m-1)!} \, r^{4m-1} dr, \quad r \ge 0.
\]

\

\textbf{Positively curved model.} The positively curved model of a K\"ahler manifold is the quaternionic projective space $\mathbb HP^m$. It can be constructed as follows. Consider the unit sphere
\[
\bS^{4m+3}=\lbrace q=(q_1,\cdots,q_{m+1})\in \mathbb{H}^{m+1}, \| q \| =1\rbrace.
\]
The group of unit quaternions is isomorphic to the Lie group $\mathbf{SU}(2)$.
Thus, there is an isometric group action of $\mathbf{SU}(2)$ on $\bS^{4m+3}$ which is  defined by $$q \cdot(q_1,\cdots, q_{m+1}) = (q q_1,\cdots, qq_{m+1}). $$

The quotient space $\bS^{4m+3} / \mathbf{SU}(2)$ is defined as the quaternionic projective space $\mathbb{H}P^m$ and the projection map $\pi :  \bS^{4m+3} \to \mathbb{H}P^m$ is a Riemannian submersion with totally geodesic fibers. The quaternion K\"ahler structure on  $\mathbb{H}P^m$ is inherited from the one in $\mathbb{H}^{m+1}$ through this construction.

 To parametrize points in $\mathbb{H}P^m \setminus \{ \infty \}$, we  use the local inhomogeneous coordinates given by $w_j=q_{m+1}^{-1} q_m$, $1 \le j \le m$, $q \in \mathbb{H}^{n+1}$, $q_{m+1}\neq 0$. The point $ \infty $ on $\mathbb{H}P^m$ corresponds to $q_{m+1}= 0$ and one can identify $\mathbb{H}P^m$ with $\mathbb{H}^m \cup \{ \infty \}$.

As before,  the submersion $\pi$ allows to construct the Brownian motion on $\mathbb{H}P^m$ from the Riemannian Brownian motion on $\bS^{4m+3}$. Indeed,  let $(Q_t)_{t \ge 0}$ be a Brownian motion on the Riemannian sphere  $\bS^{4m+3} \subset \mathbb{H}^{m+1}$ started at the north pole \footnote{We  call here north pole the point with quaternionic coordinates $q_1=0,\cdots, q_{m+1}=1$. }. Since $\mathbb{P}( \exists t \ge 0, Q^{m+1}(t)=0 )=0$, one deduces that
\begin{align}\label{BMquaternionsphere}
W_t= \left( (Q^{m+1}_t)^{-1} Q^1_t , \cdots, (Q^{m+1}_t)^{-1} Q^m_t \right), \quad t \ge 0,
\end{align}
is a Brownian motion on $\mathbb{H}P^m$, i.e. is a diffusion process with generator 

\[
\Delta_{\mathbb{H}P^m}=4(1+|w|^2)^2\sum_{k=1}^m \mathrm{Re} \left( \frac{\partial^2}{\partial w_k \partial\overline{w_k}} \right)- 8(1+|w|^2)\mathrm{Re} \left( \sum_{j=1}^m w_j \frac{\partial}{\partial w_j} \right)
\]

In real coordinates, we have $w_i=t_i+x_iI+y_iJ+z_iK$ and 
\begin{equation*}
\frac{\partial}{\partial w_i}:=\frac12\left(\frac{\partial}{\partial t_i}-\frac{\partial}{\partial x_i}I-\frac{\partial}{\partial y_i}J-\frac{\partial}{\partial z_i}K\right).
\end{equation*}

 The radial part of $W$ defined by 
\[
r_t=\arctan| W_t|=\arctan \left( \frac{1}{| Q^{m+1}_t|}\sqrt{1- | Q^{m+1}_t|^2 }\right)
\]
is  a diffusion process with Jacobi generator
\[
L_{\mathbb{H}P^m}=\frac{\partial^2}{\partial r^2}+((4m-4)\cot r+6\cot2 r)\frac{\partial}{\partial r} .
\]
We note that $L_{\mathbb{H}P^m}=\mathcal{L}^{2m-1,1}$ where $\mathcal{L}^{2m-1,1}$ is the operator studied in the appendix of \cite{MR3719061}. In particular, the spectrum of $\mathbb{H}P^m$ is given by:
\[
\mathrm{Sp} ( \mathbb{H}P^m)=\left\{ 4k (k+2m+1), k \ge 1 \right\}.
\]
Finally, we note that the radial part of the Riemannian volume measure writes
\[
d\mu_{\mathbb{H}P^m}= \frac{\pi^{2m}}{4(2m-1)!} \, (\sin r)^{4m-4} \sin (2r)^3 \,dr, \quad  0 \le r \le \frac{\pi}{2}. 
\]


\

\

\textbf{Negatively curved model.} The positively curved model of a K\"ahler manifold is the quaternionic hyperbolic space $\mathbb HH^m$. It can be constructed as follows. 
Let us consider the quaternionic hyperboloid
\[
\mathcal Q^{4m+3}=\{ q \in \mathbb{H}^{m+1}, | q_1|^2+\cdots+|q_m|^2 -|q_{m+1}|^2=-1 \} \subset \mathbb{H}^{m+1} .
\]

The group $\mathbf{SU}(2)$ acts isometrically on $\mathcal Q^{4m+3}$. The quotient space of $\mathcal Q^{4m+3}$ by  this action is defined to be $\mathbb{H}H^m$ and the projection map $\pi :  \mathcal Q^{4m+3} \to \mathbb HH^m$ is a Riemannian submersion with totally geodesic fibers. The quaternion K\"ahler structure on  $\mathbb{H}H^m$ is inherited from the one in $\mathbb{H}^{m+1}$.

To parametrize $\mathbb{H}H^m$, we use the global inhomogeneous coordinates given by $w_j=q^{-1}_{m+1}q_j$ where $(q_1,\dots, q_{m+1})\in \mathcal Q^{4m+3}$. In those coordinates the Laplace operator of $\mathbb{H}H^m$ can be written:

 \[
\Delta_{\mathbb{H}H^m}=4(1-|w|^2)^2 \sum_{k=1}^m \mathrm{Re} \left( \frac{\partial^2}{\partial w_k \partial\overline{w_k}}\right)+ 8(1+|w|^2)\mathrm{Re} \left( \sum_{j=1}^m w_j \frac{\partial}{\partial w_j} \right)
\]

The Brownian motion $(W_t)_{ t \ge 0}$ on $\mathbb{H}H^m$ is the diffusion with generator $\Delta_{\mathbb{H}H^m}$.  It can be represented as
\[
W_t= \left( (Q^{m+1}_t)^{-1} Q^1_t , \cdots, (Q^{m+1}_t)^{-1} Q^m_t \right), \quad t \ge 0
\]
where $(Q_t)_{t \ge 0}$ is a Brownian motion on $\mathcal Q^{4m+3}$.

The radial part of $W$ defined by 
\[
r_t=\arctanh | W_t|=\arctanh \left( \frac{1}{| Q^{m+1}_t|}\sqrt{| Q^{m+1}_t|^2-1 }\right)
\]
is  a diffusion process with hyperbolic Jacobi generator

\[
L_{\mathbb{H}H^m}= \frac{\partial^2}{\partial r^2}+((4m-4)\coth r+6  \coth 2r)\frac{\partial}{\partial r}
 \]
Finally, we note that the radial part of the Riemannian volume measure writes
\[
d\mu_{\mathbb{H}H^m}= \frac{\pi^{2m}}{4(2m-1)!} \, (\sinh r)^{4m-4} \sinh (2r)^3 \,dr, \quad  r \ge 0. 
\]

We refer to \cite{BDW1,BDW2} and references therein for complementary details.

\subsection{Summary of the model spaces}\label{summary}

For later use, and as a summary, we collect the results about the model spaces that will be used later. Additionally, in those model spaces the holomorphic/quaternionic sectional curvatures and orthogonal Ricci curvatures defined earlier may be computed explicitly and yield the following results:

%
%

\begin{table}[H]
\centering
\scalebox{0.8}{
\begin{tabular}{|p{1.5cm}||p{8cm}|c|>{\centering\arraybackslash}p{1.8cm}|c|  }
  \hline
 $\M$ &   Radial Laplacian   &   Radial measure      \\
  \hline
 \hline
$\mathbb{C}^m$ & $ L_{\mathbb{C}^m}=\frac{\partial^2}{\partial r^2} + \frac{2m-1}{r} \frac{\partial}{\partial r} $ & $d\mu_{\mathbb{C}^m} = 2 \frac{\pi^m}{(m-1)!} \, r^{2m-1} dr$  \\
 $\mathbb{C}P^m$ &$L_{\mathbb{C}P^m}=\frac{\partial^2}{\partial r^2}+((2m-2)\cot r +2 \cot 2 r)\frac{\partial}{\partial r} $ & $d\mu_{\mathbb{C}P^m}= \frac{\pi^m}{(m-1)!} \, (\sin r)^{2m-2} \sin (2r) \,dr$    \\
$\mathbb{C}H^m$  & $L_{\mathbb{C}H^m}= \frac{\partial^2}{\partial r^2}+((2m-2)\coth r+2  \coth 2r)\frac{\partial}{\partial r}$ & $d\mu_{\mathbb{C}H^m}= \frac{\pi^m}{(m-1)!} \, (\sinh r)^{2m-2} \sinh (2r) \,dr$   \\
  \hline
\end{tabular}}
\caption{Radial Laplacians in K\"ahler model spaces.}
\label{Table 1}
\end{table}

\begin{table}[H]
\centering
\scalebox{0.8}{
\begin{tabular}{|p{1.5cm}||p{1.5cm}|c|>{\centering\arraybackslash}p{1.8cm}|c|  }
  \hline
 $\M$ &  $H$ & $\mathrm{Ric}^\perp$  \\
  \hline
 \hline
$\mathbb{C}^m$ &  $0 $ & $0$ \\
 $\mathbb{C}P^m$ & 4 & $2m-2$  \\
$\mathbb{C}H^m$  &  -4 & $-(2m-2)$  \\
  \hline
\end{tabular}}
\caption{Curvatures of K\"ahler model spaces.}
\label{Table 2}
\end{table}

 \begin{table}[H]
\centering
\scalebox{0.8}{
\begin{tabular}{|p{1.5cm}||p{8cm}|c|>{\centering\arraybackslash}p{1.8cm}|c|  }
  \hline
 $\M$ &   Radial Laplacian   &   Radial measure      \\
  \hline
 \hline
$\mathbb{H}^m$ & $ L_{\mathbb{H}^m}=\frac{\partial^2}{\partial r^2} + \frac{4m-1}{r} \frac{\partial}{\partial r} $ & $d\mu_{\mathbb{H}^m} = 2 \frac{\pi^{2m}}{(2m-1)!} \, r^{4m-1} dr$ \\
 $\mathbb{H}P^m$ &$L_{\mathbb{H}P^m}=\frac{\partial^2}{\partial r^2}+((4m-4)\cot r+6\cot2 r)\frac{\partial}{\partial r}$ & $d\mu_{\mathbb{H}P^m}= \frac{\pi^{2m}}{4(2m-1)!} \, (\sin r)^{4m-4} \sin (2r)^3 \,dr$    \\
$\mathbb{H}H^m$  & $L_{\mathbb{H}H^m}= \frac{\partial^2}{\partial r^2}+((4m-4)\coth r+6  \coth 2r)\frac{\partial}{\partial r}$ & $d\mu_{\mathbb{H}H^m}= \frac{\pi^{2m}}{4(2m-1)!} \, (\sinh r)^{4m-4} \sinh (2r)^3 \,dr$   \\
  \hline
\end{tabular}}
\caption{Radial Laplacians in quaternion K\"ahler model spaces.}
\label{Table 3}
\end{table}

 \begin{table}[H]
\centering
\scalebox{0.8}{
\begin{tabular}{|p{1.5cm}||p{1.5cm}|c|>{\centering\arraybackslash}p{1.8cm}|c|  }
  \hline
 $\M$ &   $Q$ & $\mathrm{Ric}^\perp$  \\
  \hline
 \hline
$\mathbb{H}^m$ &  $0 $ & $0$ \\
 $\mathbb{H}P^m$ & 12 & $4m-4$  \\
$\mathbb{H}H^m$  &  -12 & $-(4m-4)$  \\
  \hline
\end{tabular}}
\caption{Curvatures of the quaternion K\"ahler model spaces.}
\label{Table 4}
\end{table}

\section{Laplacian comparison theorems}

This subsection is devoted to the proofs of the sharp Laplace comparison theorems in K\"ahler and quaternion K\"ahler manifolds. The main technical tool is the classical index lemma. In the K\"ahler case, the comparison theorem is due to Ni-Zheng \cite{MR3858834} but seems to be new in the quaternion K\"ahler case.

We introduce the comparison function.

\begin{equation}
F(k,r) = \begin{cases}  \sqrt{k} \cot \sqrt{k} r & \text{if $k > 0$,} \\
\frac{1}{r} & \text{if $k = 0$,}\\ \sqrt{|k|} \coth \sqrt{|k|} r & \text{if $k < 0$.} \end{cases}
\end{equation}

\subsection{K\"ahler case}

Let $(\M,g,J)$ be a complete K\"ahler with complex dimension $m$ (i.e. the real dimension is $2m$). We denote by $d(x,y)$ the Riemannian distance between $x,y \in \M$ and by $\Delta$ the Laplace-Beltrami operator on $\M$. The following Laplacian comparison theorem was proved in \cite{MR3858834}. As before, we denote by $H$ the holomorphic sectional curvature of $\M$ and by $\mathrm{Ric}^\perp$ its orthogonal Ricci curvature.

\begin{theorem}[Ni-Zheng \cite{MR3858834}]\label{comparison kahler}
Let $k \in \mathbb{R}$. Assume that $H \ge 4k$ and that $\mathrm{Ric}^\perp \ge (2m-2)k$. Let $x_0 \in \M$ and denote $r(x)=d(x_0,x)$. Then, pointwise outside of the cut-locus of $x_0$, and everywhere in the sense of distributions, one has
\[
\Delta r \le (2m-2) F(k,r) + 2F(k,2r).
\]
\end{theorem}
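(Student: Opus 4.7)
The plan is to combine the second variation formula for the distance function with the classical index lemma, tailoring the test vector fields to the K\"ahler structure. Fix $x$ outside the cut-locus of $x_0$, let $\gamma:[0,r]\to\M$ be the unit-speed minimizing geodesic from $x_0$ to $x$, and recall that in any Riemannian manifold
\[
\Delta r(x) \le \sum_{i=1}^{2m-1} \int_0^r \bigl( |\nabla_{\dot\gamma} V_i|^2 - R(V_i,\dot\gamma,\dot\gamma,V_i) \bigr)\, dt
\]
for any piecewise smooth vector fields $V_i$ along $\gamma$ orthogonal to $\dot\gamma$, with $V_i(0)=0$ and $\{V_i(r)\}_{i=1}^{2m-1}$ orthonormal at $x$. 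This is the standard consequence of writing $\Delta r$ as the trace of the Hessian together with the index lemma applied to each summand.

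The K\"ahler hypothesis is exploited by isolating the holomorphic direction. Since $\nabla J = 0$ and $\dot\gamma$ is parallel along $\gamma$, the unit vector field $J\dot\gamma$ is parallel along $\gamma$. I take $V_1(t) = \phi_1(t) J\dot\gamma(t)$, then pick a parallel orthonormal frame $E_2(t),\dots,E_{2m-1}(t)$ of the orthogonal complement of $\mathrm{span}\{\dot\gamma,J\dot\gamma\}$ along $\gamma$, and set $V_i(t) = \phi_2(t) E_i(t)$ for $i\ge 2$, with scalar factors satisfying $\phi_j(0)=0$ and $\phi_j(r)=1$. The curvature term in the $i=1$ slot equals $\phi_1^2 R(J\dot\gamma,\dot\gamma,\dot\gamma,J\dot\gamma) = \phi_1^2 H(\dot\gamma) \ge 4k\phi_1^2$, while summation over $i \ge 2$ produces $\phi_2^2 \sum_{i=2}^{2m-1} R(E_i,\dot\gamma,\dot\gamma,E_i) = \phi_2^2\, \mathrm{Ric}^\perp(\dot\gamma,\dot\gamma) \ge (2m-2)k\phi_2^2$, which is exactly why the correct hypothesis decouples into a bound on $H$ and a bound on $\mathrm{Ric}^\perp$.

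Next I specialize $\phi_1,\phi_2$ to the model Jacobi profiles: $\phi_1''+4k\phi_1=0$ and $\phi_2''+k\phi_2=0$ on $[0,r]$ with the prescribed boundary data. Integration by parts gives $\int_0^r(\phi_j'^2-c_j\phi_j^2)\,dt=\phi_j'(r)$ with $c_1=4k$, $c_2=k$, and the explicit form of $\phi_1,\phi_2$ (trigonometric, linear, or hyperbolic according to the sign of $k$) yields $\phi_1'(r)=2F(k,2r)$ and $\phi_2'(r)=F(k,r)$. Summing the $i=1$ contribution and the $2m-2$ equal contributions from $i\ge 2$ produces the claimed bound $\Delta r \le (2m-2)F(k,r)+2F(k,2r)$. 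The bound is sharp because on each model space the chosen test fields are genuine Jacobi fields, turning every inequality into an equality.

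The conceptual heart of the argument, which is the main obstacle, is recognizing that the K\"ahler condition $\nabla J=0$ allows one to use $J\dot\gamma$ as a distinguished parallel normal direction carrying the holomorphic sectional curvature, while the remaining $2m-2$ directions naturally pair with $\mathrm{Ric}^\perp$; the rest is a routine model comparison. Finally, extending the pointwise bound outside the cut-locus to a distributional bound on $\M$ is standard: $r$ is smooth away from $\Cut(x_0)$, and a barrier or regularization argument shows that the singular part of the distributional Laplacian of $r$ on the cut-locus is nonpositive, so the smooth pointwise inequality lifts to the global distributional inequality.
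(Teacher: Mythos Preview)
Your proof is correct and follows essentially the same approach as the paper: both isolate the parallel direction $J\dot\gamma$ and test with the $4k$-model profile to extract $2F(k,2r)$ from the holomorphic sectional curvature bound, then use a parallel frame of the remaining $2m-2$ normal directions with the $k$-model profile to extract $(2m-2)F(k,r)$ from the $\mathrm{Ric}^\perp$ bound via the index lemma. The only differences are cosmetic---you package the index inequality as a single trace estimate and evaluate the integrals by integration by parts, whereas the paper treats each Hessian term separately and computes the integrals directly---so the two arguments are the same in substance.
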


\begin{proof}
The result can be found in \cite{MR3858834}. We provide here a self-contained proof not only for completeness but also because the structure of our proof will be generalized to the quaternionic K\"ahler case which is new.

We can assume $m \ge 2$ since for the case $m=1$, the statement reduces to the classical Laplacian comparison theorem in Riemannian geometry. Let $x_0 \in \M$ and $x \neq x_0$ which is not in the cut-locus of $x$.  Let $\gamma:[0,r(x)] \to \M$ be the unique length parametrized geodesic connecting $x_0$ to $x$ . At $x$, we consider an orthonormal frame $\{ X_1(x),\cdots, X_{2m}(x) \}$ such that
\[
X_1(x) =\gamma'(r(x)), \, X_2(x)=J \gamma'(r(x)).
\]

We have then
\[
\Delta r (x) =\sum_{i=1}^{2m} \nabla^2 r ( X_i(x),X_i(x)).
\]
We divide the above sum into three parts: $\nabla^2 r ( X_1(x),X_1(x))$, $\nabla^2 r ( X_2(x),X_2(x))$ and $\sum_{i=3}^{2m} \nabla^2 r ( X_i(x),X_i(x))$.  The first term $\nabla^2 r ( X_1(x),X_1(x))$ is zero because $X_1(x) =\gamma'(r(x))$.
We now estimate the second term. Note that the vector field defined along $\gamma$ by $J \gamma'$ is parallel because $J$ is parallel and $\gamma$ is a geodesic, thus satisfies $\nabla_{\gamma'} \gamma'=0$. We consider then the vector field defined along $\gamma$ by
\[
\tilde{X} (\gamma(t)) =\frac{\mathfrak{s}(4k,t)}{\mathfrak{s}(4k,r(x))} J \gamma'(t).
\]
where
\begin{equation}
\mathfrak{s}(k,t) = \begin{cases}   \sin  \sqrt{k} t & \text{if $k > 0$,} \\
t & \text{if $k = 0$,}\\  \sinh \sqrt{|k|} t & \text{if $k < 0$.} \end{cases}
\end{equation}

From the index lemma  we have
\begin{align*}
\nabla^2 r ( X_2(x),X_2(x))  & \le  \int_0^{r(x)} \left(\langle \nabla_{\gamma'}  \tilde X, \nabla_{\gamma'} \tilde X  \rangle-\langle R(\gamma',\tilde X) \tilde X,\gamma'  \rangle \right) dt \\
 & \le \frac{1}{\mathfrak{s}(4k,r(x))^2} \int_0^{r(x)}\left( \mathfrak{s}'(4k,t)^2-\mathfrak{s}(4k,t)^2 \langle R(\gamma',J\gamma') J \gamma',\gamma'  \rangle \right) dt  \\
 & \le \frac{1}{\mathfrak{s}(4k,r(x))^2} \int_0^{r(x)}\left( \mathfrak{s}'(4k,t)^2-4k\mathfrak{s}(4k,t)^2 \right) dt \\
 &\le 2F(k,2r(x)).
\end{align*}

Finally, we estimate the last term $\sum_{i=3}^{2m} \nabla^2 r ( X_i(x),X_i(x))$. 
In order to proceed, we denote by $\{ X_3,\cdots, X_{2m} \}$ the vector fields along $\gamma$ obtained by parallel transport of $\{ X_3(x),\cdots, X_{2m}(x) \}$. We observe that everywhere along $\gamma$, the family
\[
\{ \gamma', J \gamma', X_3,\cdots, X_{2m} \}
\]
is an orthonormal frame. We consider then the vector field defined along $\gamma$ by
\[
\tilde{X}_i  (\gamma(t)) =\frac{\mathfrak{s}(k,t)}{\mathfrak{s}(k,r(x))} X_i (\gamma(t)), \, i=3, \cdots, 2m.
\]
From the index lemma we obtain
\begin{align*}
\sum_{i=3}^{2m} \nabla^2 r ( X_i(x),X_i(x)) & \le \sum_{i=3}^{2m}  \int_0^{r(x)} \left(\langle \nabla_{\gamma'}  \tilde X_i, \nabla_{\gamma'} \tilde X_i  \rangle-\langle R(\gamma',\tilde X_i) \tilde X_i,\gamma'  \rangle \right) dt \\
 & \le \frac{1}{\mathfrak{s}(k,r(x))^2} \sum_{i=3}^{2m} \int_0^{r(x)}\left( \mathfrak{s}'(k,t)^2-\mathfrak{s}(k,t)^2 \langle R(\gamma',\tilde X_i') \tilde X_i,\gamma'  \rangle \right) dt  \\
 & \le \frac{1}{\mathfrak{s}(k,r(x))^2}  \int_0^{r(x)}\left( (2m-2) \mathfrak{s}'(k,t)^2-\mathfrak{s}(k,t)^2 \sum_{i=3}^{2m}  \langle R(\gamma',\tilde X_i') \tilde X_i,\gamma'  \rangle \right) dt  \\
 & \le \frac{1}{\mathfrak{s}(k,r(x))^2}  \int_0^{r(x)}\left( (2m-2) \mathfrak{s}'(k,t)^2-\mathfrak{s}(k,t)^2 \mathrm{Ric}^\perp (\gamma',\gamma') \right) dt  \\
 & \le \frac{2m-2}{\mathfrak{s}(k,r(x))^2} \int_0^{r(x)}\left( \mathfrak{s}'(k,t)^2-k\mathfrak{s}(k,t)^2 \right) dt \\
 &\le (2m-2)F(k,r(x)).
\end{align*}
Therefore we conlude
\[
\Delta r (x)\le (2m-2) F(k,r(x)) + 2F(k,2r(x)).
\]
Finally, proving that  everywhere in the sense of distributions, one has
\[
\Delta r \le (2m-2) F(k,r) + 2F(k,2r).
\]
is similar to the corresponding proof in the Riemannian case (which relies on Calabi lemma), so we skip the details.
\end{proof}

It is remarkable that the theorem is sharp on the model spaces $\mathbb{C}^m, \mathbb{C}P^m$ and $\mathbb{C}H^m$. On $\mathbb{C}^m$, one has $k=0$ and
\[
(2m-2) F(k,r) + 2F(k,2r)= \frac{2m-1}{r}.
\]
On $\mathbb{C}P^m$, one has $k=1$ and
\[
(2m-2) F(k,r) + 2F(k,2r)= (2m-2)\cot r +2 \cot 2 r,
\]
and on $\mathbb{C}H^m$, one has $k=-1$ and
\[
(2m-2) F(k,r) + 2F(k,2r)=(2m-2)\coth r +2 \coth 2 r.
\]

\subsection{Quaternion K\"ahler case}

Let now $(\M,g,I,J,K)$ be a complete quaternion K\"ahler with quaternionic dimension $m$ (i.e. the real dimension is $4m$). We also denote by $d(x,y)$ the Riemannian distance between $x,y \in \M$ and by $\Delta$ the Laplace-Beltrami operator on $\M$. As before, we denote by $Q$ the quaternionic sectional curvature of $\M$ and by $\mathrm{Ric}^\perp$ its orthogonal Ricci curvature.

\begin{theorem}\label{comparison quaternion}
Let $k \in \mathbb{R}$. Assume that $Q \ge 12k$ and that $\mathrm{Ric}^\perp \ge (4m-4)k$. Let $x_0 \in \M$ and denote $r(x)=d(x_0,x)$. Then, pointwise outside of the cut-locus of $x_0$, and everywhere in the sense of distributions, one has
\[
\Delta r \le (4m-4) F(k,r) + 6F(k,2r).
\]
\end{theorem}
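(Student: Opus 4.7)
The plan is to follow the template of the proof of Theorem \ref{comparison kahler}, replacing the single complex direction $J\gamma'$ with the three quaternionic directions $I\gamma', J\gamma', K\gamma'$. Assume $m \ge 2$ (the case $m=1$ is the classical Riemannian comparison in dimension $4$). Fix $x_0 \in \M$ and $x$ outside the cut-locus of $x_0$, let $\gamma:[0,r(x)] \to \M$ be the length-parametrized minimizing geodesic from $x_0$ to $x$, and pick at $x$ an orthonormal frame $\{X_1(x),\dots,X_{4m}(x)\}$ with
\[
X_1(x) = \gamma'(r(x)), \quad X_2(x) = I\gamma'(r(x)), \quad X_3(x) = J\gamma'(r(x)), \quad X_4(x) = K\gamma'(r(x)).
\]
The relations $IJ=K$ and $I^2=J^2=K^2=-\mathbf{Id}$ combined with the skew-symmetry of $I,J,K$ with respect to $g$ guarantee that these four vectors are orthonormal. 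Split $\Delta r(x) = \sum_{i=1}^{4m} \nabla^2 r(X_i(x),X_i(x))$ into the null term $i=1$, the \emph{quaternionic block} $i=2,3,4$, and the \emph{orthogonal block} $i\ge 5$.

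The structural input that replaces the parallelism $\nabla J=0$ used in the K\"ahler case is the following. Because $\nabla_X I, \nabla_X J, \nabla_X K$ all lie in $\mathbf{span}\{I,J,K\}$ and $\nabla_{\gamma'}\gamma'=0$, one has for instance $\nabla_{\gamma'}(I\gamma') = (\nabla_{\gamma'}I)\gamma' \in \mathbf{span}\{I\gamma', J\gamma', K\gamma'\}$, and similarly for $J\gamma'$ and $K\gamma'$. Hence the rank-$3$ distribution $\mathbf{span}\{I\gamma', J\gamma', K\gamma'\}$ along $\gamma$ is $\nabla_{\gamma'}$-invariant, so it is preserved under parallel transport. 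Letting $E_2, E_3, E_4$ be the parallel transports of $X_2(x), X_3(x), X_4(x)$ along $\gamma$, this implies that at each $t$ they form an orthonormal basis of $\mathbf{span}\{I\gamma'(t), J\gamma'(t), K\gamma'(t)\}$; the parallel transports $E_5,\dots,E_{4m}$ of $X_5(x),\dots,X_{4m}(x)$ then remain in the orthogonal complement $\mathbf{span}\{\gamma', I\gamma', J\gamma', K\gamma'\}^\perp$ for every $t$.

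Both blocks are now handled by the index lemma exactly as in the K\"ahler proof. For the quaternionic block, apply the index lemma with test fields $\tilde X_i(\gamma(t)) = (\mathfrak{s}(4k,t)/\mathfrak{s}(4k,r(x)))E_i(t)$ for $i=2,3,4$. The trace of the symmetric form $(Y,Z) \mapsto R(\gamma', Y, Z, \gamma')$ on the $\nabla_{\gamma'}$-invariant $3$-plane is independent of the chosen orthonormal basis, so $\sum_{i=2}^{4}\langle R(\gamma', E_i)E_i,\gamma'\rangle = Q(\gamma') \ge 12k$, and the integral computation already carried out in the proof of Theorem \ref{comparison kahler} (applied three times) gives
\[
\sum_{i=2}^4 \nabla^2 r(X_i(x),X_i(x)) \le \frac{1}{\mathfrak{s}(4k,r(x))^2}\int_0^{r(x)} \left( 3\mathfrak{s}'(4k,t)^2 - 12k\,\mathfrak{s}(4k,t)^2 \right) dt = 6 F(k, 2r(x)).
\]
For the orthogonal block, apply the index lemma with test fields $(\mathfrak{s}(k,t)/\mathfrak{s}(k,r(x)))E_i(t)$ for $i=5,\dots,4m$; since the $E_i(t)$ span $\mathbf{span}\{\gamma', I\gamma', J\gamma', K\gamma'\}^\perp$, one has $\sum_{i=5}^{4m}\langle R(\gamma', E_i)E_i, \gamma'\rangle = \mathrm{Ric}^\perp(\gamma',\gamma') \ge (4m-4)k$, and the same integration yields $\sum_{i=5}^{4m} \nabla^2 r(X_i,X_i) \le (4m-4)F(k,r(x))$.

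Adding the two contributions produces the pointwise bound $\Delta r(x) \le (4m-4) F(k,r(x)) + 6 F(k,2r(x))$ outside the cut-locus; its promotion to a distributional inequality on $\M$ is handled by the Calabi argument already invoked in the proof of Theorem \ref{comparison kahler}. The main obstacle, and the only genuinely new ingredient compared to the K\"ahler case, is the second paragraph: without the $\nabla_{\gamma'}$-invariance of $\mathbf{span}\{I\gamma', J\gamma', K\gamma'\}$ along $\gamma$ and the ensuing trace identity, the quaternionic sectional curvature hypothesis $Q \ge 12k$ cannot be fed into the three individual index-lemma estimates in the required way.
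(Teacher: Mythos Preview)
Your proof is correct and follows essentially the same approach as the paper: the same frame, the same three-way split of $\Delta r$, the same key observation that the rank-$3$ distribution $\mathbf{span}\{I\gamma',J\gamma',K\gamma'\}$ is preserved under parallel transport along $\gamma$ (yielding the trace identity $\sum_{i=2}^4 R(\gamma',E_i,E_i,\gamma')=Q(\gamma')$), and the same index-lemma test fields with scaling $\mathfrak{s}(4k,\cdot)$ on the quaternionic block and $\mathfrak{s}(k,\cdot)$ on the orthogonal block. Your write-up is in fact slightly more explicit than the paper's in justifying why the parallel transports $E_5,\dots,E_{4m}$ remain orthogonal to the quaternionic $4$-plane and why the trace over the $3$-plane is basis-independent.
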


\begin{proof}
The proof proceeds as in the K\"ahler case but is slightly more involved.
As before, we can assume $m \ge 2$ since for the case $m=1$, the statement reduces to the classical Laplacian comparison theorem in Riemannian geometry. Let $x_0 \in \M$ and $x \neq x_0$ which is not in the cut-locus of $x$.  Let $\gamma:[0,r(x)] \to \M$ be the unique length parametrized geodesic connecting $x_0$ to $x$ 
At $x$, we consider an orthonormal frame $\{ X_1(x),\cdots, X_{4m}(x) \}$ such that
\[
X_1(x) =\gamma'(r(x)), \, X_2(x)=I \gamma'(r(x)), \, X_3(x)=J \gamma'(r(x)), \,  X_4(x)=K \gamma'(r(x))
\]

We have then
\[
\Delta r (x) =\sum_{i=1}^{4m} \nabla^2 r ( X_i(x),X_i(x)).
\]

We divide the above sum into three parts: $\nabla^2 r ( X_1(x),X_1(x))$, $\sum_{i=2}^4 \nabla^2 r ( X_i(x),X_i(x))$ and $\sum_{i=5}^{4m} \nabla^2 r ( X_i(x),X_i(x))$.  The first term $\nabla^2 r ( X_1(x),X_1(x))$ is zero because $X_1(x) =\gamma'(r(x))$.
Estimating the second term requires more work than in the K\"ahler case, because the  vectors  $I \gamma', J \gamma'$ and $K \gamma'$ might not be parallel along $\gamma$. Let us denote by $X_2,X_3$ and $X_4$ the vector fields along $\gamma$ obtained by parallel transport along $\gamma$ of $X_2 (x),X_3(x)$ and $X_4(x)$. Since along $\gamma$ one has
\[
\nabla_{\gamma'} I, \nabla_{\gamma'} J, \nabla_{\gamma'} K \in \mathbf{span} \{ I,J,K\} 
\]
we deduce that along $\gamma$ one has
\[
\mathbf{span} \{ X_2, X_3, X_4 \}= \mathbf{span} \{ I \gamma' ,J \gamma' , K \gamma' \}.
\]
Moreover $\{ X_2, X_3, X_4 \}$ and $\{ I \gamma' ,J \gamma' , K \gamma' \}$ are both orthonormal along $\gamma$. One deduces
\begin{align*}
 & R(\gamma',X_2,X_2,\gamma')+R(\gamma',X_3,X_3,\gamma')+R(\gamma',X_4,X_4,\gamma') \\
=& R(\gamma',I\gamma',I\gamma',\gamma')+R(\gamma',J\gamma',J\gamma',\gamma')+R(\gamma',K\gamma',K\gamma',\gamma') \\
=& Q (\gamma').
\end{align*}
As a consequence, if we consider  the vector field defined along $\gamma$ by
\[
\tilde{X}_i  (\gamma(t)) =\frac{\mathfrak{s}(4k,t)}{\mathfrak{s}(4k,r(x))} X_i (\gamma(t)), \, i=2,3,4 ,
\]
we obtain by the same computation as in the proof of theorem \ref{comparison kahler}
\[
\sum_{i=2}^4 \nabla^2 r ( X_i(x),X_i(x)) \le 6F(k,2r(x)).
\]
The estimate of the term  $\sum_{i=5}^{4m} \nabla^2 r ( X_i(x),X_i(x))$ is similar as in the proof of theorem \ref{comparison kahler}, so we skip the details for conciseness.
\end{proof}

As in the K\"ahler case, it is remarkable that the theorem is sharp on the model spaces $\mathbb{H}^m, \mathbb{H}P^m$ and $\mathbb{H}H^m$.

\section{ Comparison theorems for radial processes and applications}

\subsection{It\^o formula for radial processes on Riemannian manifolds}

To fix notations, we first recall the well-known Kendall theorem \cite{kendall} about the It\^o formula for the radial parts of Brownian motions on a Riemannian manifold. Throughout this subsection $(\M,g)$ is a complete Riemannian manifold and $\Delta$ denotes the Laplace-Beltrami operator. Let $( ( X_t )_{t \ge 0} , ( \mathbb P_x )_{x \in \M} )$ be the diffusion process generated by $\Delta$, i.e. the Brownian motion on $\M$.  Take $x_0 \in \M$ and set $r (x) := d ( x_0 , x )$. 
We denote by $\Cut (x_0)$ the cut-locus of $x_0$.  Let $\zeta$ be the life time of $X$. 

\begin{theorem}[Kendall  \cite{kendall}] \label{th:Ito-radial}
For each $x_1 \in \M$, 
there exist a non-decreasing continuous process $l_t$ 
which increases only when $X_t \in \Cut (x_0)$ 
and a Brownian motion $\be_t$ on $\R$ with $\ang{\be}_t = 2 t$
such that 
\begin{equation} \label{eq:Ito-radial} 
r ( X_{ t \wedge \zeta } ) 
= 
r ( X_0 ) 
+ \be_t 
+ \int_0^{ t \wedge \zeta } \Delta r ( X_s ) ds
- l_{ t \wedge \zeta }
\end{equation}
holds $\mathbb P_{x_1}$-almost surely. 
\end{theorem}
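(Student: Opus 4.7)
The approach is to combine the classical Itô formula, which is valid on the open set $\M\setminus(\Cut(x_0)\cup\{x_0\})$ where $r$ is smooth, with a semi-concavity/regularization argument that identifies the extra non-decreasing contribution coming from the cut locus.

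First, outside $\Cut(x_0)\cup\{x_0\}$ the distance $r$ is smooth with $|\nabla r|=1$. Since the Brownian motion $X$ is generated by $\Delta$ (rather than $\tfrac12\Delta$), standard stochastic calculus on manifolds yields that
\[
M_t \ := \ r(X_t) - r(X_0) - \int_0^t \Delta r (X_s)\, ds
\]
is a continuous local martingale on time intervals during which $X$ avoids $\Cut(x_0)\cup\{x_0\}$, with quadratic variation $\langle M\rangle_t = 2\int_0^t |\nabla r|^2(X_s)\, ds = 2t$. By Lévy's characterization, once the process is extended past the cut locus this martingale coincides with a Brownian motion $\beta$ satisfying $\langle\beta\rangle_t = 2t$.

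Second, to extend the identity across $\Cut(x_0)$, I would exploit the fact that $r$ is locally semi-concave on $\M\setminus\{x_0\}$: Calabi's construction (already invoked at the end of the proof of Theorem \ref{comparison kahler}) produces, near any point of the cut locus, smooth local upper barriers $\tilde r\ge r$ with $\tilde r=r$ at the base point. Hence the distributional Laplacian of $r$ decomposes as the pointwise classical Laplacian (defined a.e.\ on $\M\setminus\Cut(x_0)$) plus a non-positive singular Radon measure $-\mu$ supported on $\Cut(x_0)$. Applying the Itô--Tanaka formula for semi-concave functions — or, equivalently, approximating $r$ from above by a sequence of smooth $r_n\downarrow r$ with $\Delta r_n$ converging in the sense of measures and passing to the limit in the standard Itô decomposition of $r_n(X_t)$ — yields \eqref{eq:Ito-radial}, with $l_t$ the continuous additive functional associated to $\mu$. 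Since $\supp\mu\subseteq\Cut(x_0)$, the process $l_t$ is non-decreasing and increases only when $X_t\in\Cut(x_0)$.

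The main obstacle is this last passage to the limit: one must verify that the regularizing Laplacians converge as Radon measures, that the resulting additive functional is genuinely continuous (and not merely c\`adl\`ag), and that $X$ does not spend positive Lebesgue time on $\Cut(x_0)$ nor at $x_0$ (needed so that the integral in \eqref{eq:Ito-radial} is well-defined and that $\beta$ is a true Brownian motion on all of $[0,\zeta)$). Kendall's original argument \cite{kendall} handles these points via an explicit continuous approximation of $r$ tailored to the cut-locus structure; a cleaner modern alternative is to invoke the general Itô calculus for locally semi-concave functions, from which the decomposition follows directly.
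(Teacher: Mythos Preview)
The paper does not prove this theorem at all: it is merely \emph{stated} as a known result due to Kendall, with a citation to \cite{kendall}, and then used as a black box in the subsequent comparison arguments. There is therefore no ``paper's own proof'' to compare against.

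Your sketch is a reasonable outline of one route to Kendall's result and correctly identifies the two ingredients: the smooth It\^o formula away from $\Cut(x_0)\cup\{x_0\}$ giving the martingale part with quadratic variation $2t$, and a limiting/semi-concavity argument to produce the non-decreasing local-time term $l_t$ supported on the cut locus. You are also honest that the genuine work lies in the passage to the limit---controlling the approximating Laplacians, ensuring continuity of $l$, and checking that $X$ spends zero Lebesgue time on $\Cut(x_0)\cup\{x_0\}$. These points are exactly what Kendall's paper, or the treatment in Hsu's book \cite{MR1882015}, handles carefully; your sketch would not stand alone as a proof without them, but as an indication of strategy it is accurate.
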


\subsection{Comparison theorems on K\"ahler manifolds}

Let $(\M,g,J)$ be a complete K\"ahler with complex dimension $m$  Let $( ( X_t )_{t \ge 0} , ( \mathbb{P}_x )_{x \in \M} )$ be the  Brownian motion on $\M$.  As before, we fix a point $x_0 \in \M$.  For $x_1 \in \M$, we consider
the solution of the stochastic differential equation
\[
  \rho^k_t=d (x_0,x_1)+\int_0^t \left( (2m-2) F(k,\rho^k_s) + 2F(k,2\rho^k_s)\right) 
   ds+\sqrt{2} \beta_t
\]
where $\beta$ is a standard Brownian motion under $\mathbb{P}_{x_1}$.

With Laplacian comparison theorems and It\^o's formula \eqref{eq:Ito-radial} in hands, it is possible to apply \textit{mutatis mutandis} the  general available comparison methods  developed in the Riemannian case for instance by Ichihara \cite{ichihara}. We also refer to sections 3.5, 3.6 and 4.5 in the book \cite{MR1882015} by Hsu. This yields the following basic comparison result.

\begin{theorem}\label{comparison kahler  1d}
 Let $k \in \mathbb{R}$. Assume that $H \ge 4k$ and that $\mathrm{Ric}^\perp \ge (2m-2)k$. Then, for $x_1 \in \M$, $R>0$, and $s \le R$
\[
  \mathbb{P}_{x_1}\big\{ d (x_0,X_t) <s , \, t \le \tau_R \big\} \ge \mathbb{P}_{x_1}\big\{ \rho^k_t<s, \, t \le \tau^k_R \big\},
\]
where $\tau_R$ is the hitting time of the  geodesic ball in $\M$ with center $x_0$ and radius $R$ and $\tau^k_R$ the hitting time of the level $R$ by $\rho^k$.
\end{theorem}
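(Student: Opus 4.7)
The plan is to run the Ichihara-style coupling argument with Theorem \ref{comparison kahler} as the engine. Under $\mathbb{P}_{x_1}$, Kendall's formula (Theorem \ref{th:Ito-radial}) gives
\[
r(X_{t \wedge \zeta}) = d(x_0,x_1) + \sqrt{2}\,\tilde\beta_{t\wedge\zeta} + \int_0^{t \wedge \zeta} \Delta r(X_s)\,ds - l_{t \wedge \zeta},
\]
where $\tilde\beta$ is a standard Brownian motion and the nondecreasing process $l$ is supported on $\{X_s\in \Cut(x_0)\}$. I would use the same $\tilde\beta$ (extended past $\zeta$ if necessary by an independent piece) to drive the model equation, i.e.\
\[
\rho^k_t = d(x_0,x_1) + \sqrt{2}\,\tilde\beta_t + \int_0^t g(\rho^k_s)\,ds, \qquad g(r) := (2m-2)F(k,r) + 2F(k,2r).
\]
Theorem \ref{comparison kahler} asserts $\Delta r(X_s) \le g(r(X_s))$ almost everywhere along the path, and the term $-l$ is nonincreasing. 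Subtracting the two equations cancels the martingale parts, so for $t < \zeta$ the difference $D_t := r(X_t) - \rho^k_t$ is a continuous process of bounded variation satisfying
\[
D_t \le \int_0^t \bigl[g(r(X_s)) - g(\rho^k_s)\bigr]\,ds.
\]

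Next I would exploit the strict monotonicity of $g$ on $(0,\infty)$: for every $k\in\mathbb{R}$ the map $r\mapsto F(k,r)$ is decreasing, hence so is $g$. Applying the Lebesgue–Stieltjes chain rule to the positive part $D_t^+$ gives
\[
dD_t^+ \le \mathbf{1}_{\{D_t>0\}}\bigl[g(r(X_t)) - g(\rho^k_t)\bigr]\,dt - \mathbf{1}_{\{D_t>0\}}\,dl_t \le 0,
\]
because on $\{D_t > 0\}$ we have $r(X_t) > \rho^k_t$ and therefore $g(r(X_t)) \le g(\rho^k_t)$. Since $D_0^+ = 0$, this forces $D_t^+ \equiv 0$ for $t<\zeta$, yielding the pathwise comparison $r(X_t)\le \rho^k_t$ up to the lifetime. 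The potential singularity $g(r)\sim (2m-1)/r$ at the origin causes no trouble: since $d(x_0,x_1)>0$ and both $r(X_\cdot)$ and $\rho^k$ behave like Bessel processes of dimension $2m\ge 2$ near $0$, neither process returns to $0$ in finite time, so the monotonicity argument takes place on an interval where $g$ is smooth.

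Finally, the probability bound follows immediately from the pathwise comparison. On the event $\{\rho^k_t<s,\ t\le \tau^k_R\}$, the inequality $r(X_u)\le\rho^k_u$ forces $r(X_u)<R$ for all $u\le t$, hence $t\le \tau_R$ (and automatically $t<\zeta$, since $X$ cannot explode while confined in the compact ball of radius $R$), together with $r(X_t)\le \rho^k_t<s$. Thus
\[
\{\rho^k_t<s,\ t\le\tau^k_R\}\subseteq\{d(x_0,X_t)<s,\ t\le\tau_R\},
\]
and taking $\mathbb{P}_{x_1}$-probabilities yields the claim. The main technical point I expect to be the principal obstacle is legitimizing the chain rule for $D_t^+$ across the \emph{distributional} form of Theorem \ref{comparison kahler}: one must handle both the singular increment $dl_t$ from the cut locus and the pointwise inequality $\Delta r\le g(r)$ at points where $r$ is not smooth. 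Once that is in place, everything else is a routine transposition of the Riemannian scheme of Ichihara \cite{ichihara} and Hsu \cite{MR1882015} to the present K\"ahler setting.
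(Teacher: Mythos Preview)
Your argument is correct and is precisely the Ichihara--Hsu coupling scheme that the paper invokes; the paper itself gives no proof beyond the sentence ``it is possible to apply \textit{mutatis mutandis} the general available comparison methods developed in the Riemannian case for instance by Ichihara \cite{ichihara} \ldots\ and Hsu \cite{MR1882015}.'' Your write-up simply unpacks that reference, and the one substantive input specific to the K\"ahler setting---the inequality $\Delta r \le g(r)$ from Theorem~\ref{comparison kahler}---is used exactly where it should be. One small remark: you assume $d(x_0,x_1)>0$ when arguing that neither process hits $0$, but the theorem is stated for all $x_1\in\M$; the case $x_1=x_0$ is handled by the same Bessel-type behaviour you mention (dimension $2m\ge 2$ means the origin is an entrance boundary), or simply by continuity in $x_1$, so this is not a gap.
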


\subsubsection{Cheeger-Yau type lower bound for the heat kernel}

A first corollary of theorem \ref{comparison kahler  1d} is a Cheeger-Yau type lower bound for the heat kernel.   It gives a sharp lower bound for the Dirichlet heat kernel on balls  in terms of the heat kernel of a corresponding K\"ahler model space.

We introduce the following notation. For $k \in \mathbb{R}$, let $L_k$ be the diffusion operator given by
\begin{align*}
L_k=
\begin{cases}
\frac{\partial^2}{\partial r^2}+((2m-2)\sqrt{k} \cot \sqrt{k} r +2\sqrt{k} \cot 2 \sqrt{k}r)\frac{\partial}{\partial r}  & \text{ if } k >0 \\
\frac{\partial^2}{\partial r^2}+\frac{2m-1}{r} \frac{\partial}{\partial r}    &   \text{ if } k =0  \\
\frac{\partial^2}{\partial r^2}+((2m-2) \sqrt{|k|}\coth \sqrt{|k|} r +2 \sqrt{|k|} \coth 2  \sqrt{|k|} r)\frac{\partial}{\partial r}   & \text{ if } k <0
\end{cases}
\end{align*}

 and let $\mu_k$ be the measure
 
 \begin{align*}
d\mu_k=
\begin{cases}
\frac{\pi^m}{(m-1)! k^{m-1/2} } \, (\sin \sqrt{k}  r)^{2m-2} \sin (2\sqrt{k} r) \,dr & \text{ if } k >0 \\
2 \frac{\pi^m}{(m-1)!} \, r^{2m-1} dr    &   \text{ if } k =0  \\
\frac{\pi^m}{(m-1)! |k|^{m-1/2} } \, (\sinh \sqrt{|k|}  r)^{2m-2} \sinh (2\sqrt{|k|} r) \,dr  & \text{ if } k <0.
\end{cases}
\end{align*}

Note that the operator $L_k$ is symmetric with respect to the measure $\mu_k$.
With the notations of section \ref{summary}, we  have
\[
(L_{-1}, \mu_{-1}) = ( L_{\mathbb C H^m} , \mu_{\mathbb C H^m}),  \quad (L_{0}, \mu_{0}) = ( L_{\mathbb C^m} , \mu_{\mathbb C^m}), \quad (L_{1}, \mu_{1}) = ( L_{\mathbb C P^m} , \mu_{\mathbb C P^m}).
\]
Moreover, depending on the sign of $k$, $(L_k,\mu_k)$ is obtained from $(L_1,\mu_1)$, $(L_{0}, \mu_{0})$ or $(L_{-1}, \mu_{1})$ by a simple rescaling by $\sqrt{|k|}$. 
 
\begin{theorem}[Cheeger-Yau type heat kernel lower bound]\label{comparison heat kernel}
Let $k \in \mathbb{R}$. Assume that $H \ge 4k$ and that $\mathrm{Ric}^\perp \ge (2m-2)k$. Let $R>0$. Let
$( ( X^R_t )_{t \ge 0} , ( \mathbb P_x )_{x \in B(x_0,R)} )$ be a  Brownian motion on $B(x_0,R)$  with Dirichlet boundary condition.  Let $p^R(t,x,y)$ be its heat kernel with respect to the Riemannian volume measure $\mu$. Let now
$q_k^R(t,r_1,r_2)$ be the heat kernel with respect to $\mu_k$  of the diffusion on $[0,R]$ with generator $L_k$ and Dirichlet boundary condition at $R$. Then, for every 
$t>0$ and $x_1\in B (x_0,R)$
\[
   p^R(t,x_0,x_1)  \ge  q_k^R(t,0,d(x_0,x_1)).
\]
\end{theorem}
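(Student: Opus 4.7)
The plan is to follow the probabilistic strategy of Ichihara \cite{ichihara}, combining Kendall's It\^o formula (Theorem \ref{th:Ito-radial}) with the sharp Laplacian comparison of Theorem \ref{comparison kahler} to produce a space-time submartingale whose expectation yields the desired pointwise lower bound.

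\textbf{Subsolution property.} Fix $T > 0$, write $r(x) = d(x_0, x)$ and $F_k(r) := (2m-2) F(k, r) + 2 F(k, 2r)$, so that $L_k = \partial_{rr} + F_k(r) \partial_r$. Introduce the candidate comparison function
\[
f(t, x) := q_k^R(T - t, 0, r(x)), \qquad 0 \le t < T, \ x \in B(x_0, R).
\]
Off $\Cut(x_0) \cup \{x_0\}$, using $\partial_s q_k^R = L_k q_k^R$ in the third variable together with $\Delta f = \partial_{rr} q_k^R + (\Delta r) \partial_r q_k^R$, one computes
\[
(\partial_t + \Delta) f = (\Delta r - F_k(r)) \, \partial_r q_k^R(T - t, 0, r(x)).
\]
Theorem \ref{comparison kahler} gives $\Delta r \le F_k(r)$, and $q_k^R(s, 0, \cdot)$ is radially non-increasing on $[0, R]$, a property inherited from the K\"ahler model and provable by the parabolic maximum principle applied to $\partial_r q_k^R$. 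Hence $(\partial_t + \Delta) f \ge 0$ off the exceptional set.

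\textbf{Submartingale identity and integration.} Applying Kendall's formula from Theorem \ref{th:Ito-radial} to $g(t, r) = q_k^R(T - t, 0, r)$ along a Brownian motion $X$ started at $x_1$, and using $\langle \beta \rangle_t = 2 t$, one obtains
\[
f(t \wedge \tau_R, X_{t \wedge \tau_R}) - f(0, x_1) = \int_0^{t \wedge \tau_R} (\Delta r - F_k(r)) \partial_r q_k^R \, ds + M_t - \int_0^{t \wedge \tau_R} \partial_r q_k^R \, dl_s,
\]
where $M$ is a local martingale and $l$ is the non-decreasing cut-locus process. Both non-martingale integrals are non-negative: the first by the previous step, the second because $\partial_r q_k^R \le 0$ and $l$ is non-decreasing. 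Since $f$ vanishes on $\partial B(x_0, R)$ by the Dirichlet condition on $q_k^R$, localizing and taking expectation yields
\[
\int_{B(x_0, R)} q_k^R(T - t, 0, r(y)) \, p^R(t, x_1, y) \, d\mu(y) \ge q_k^R(T, 0, d(x_0, x_1)).
\]

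\textbf{Passage to the limit $t \to T^-$.} As $s := T - t \to 0^+$, the family $q_k^R(s, 0, \cdot) d\mu_k$ converges weakly to $\delta_0$ on $[0, R]$. Because $\M$ has the same real dimension $2m$ as its K\"ahler model and the two volume measures $d\mu$ and $d\mu_k$ share the Euclidean leading order $\omega_{2m-1} r^{2m-1} dr$ at the origin, the left-hand side above converges to $p^R(T, x_1, x_0)$, which equals $p^R(T, x_0, x_1)$ by symmetry of the Dirichlet heat kernel. This proves the claim.

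The principal obstacle is verifying the radial monotonicity $\partial_r q_k^R(s, 0, r) \le 0$, on which both the sign of $(\partial_t + \Delta) f$ and the sign of the local-time contribution in Step 2 depend. The remaining steps—handling $\Cut(x_0)$ through Kendall's local time $l$, the Dirichlet vanishing of $f$ at $\partial B(x_0, R)$, and the delta-type convergence at $x_0$—are standard once that monotonicity is in hand.
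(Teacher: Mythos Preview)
Your argument is correct and follows a genuinely different route from the paper. The paper first invokes Ichihara's stochastic comparison (Theorem~\ref{comparison kahler  1d}) to obtain the CDF inequality
\[
\int_{B(x_0,s)} p^R(t,x_1,y)\,d\mu(y)\;\ge\;\int_0^s q_k^R(t,d(x_0,x_1),r)\,d\mu_k(r),
\]
and then sends the \emph{spatial} radius $s\to 0^+$, recovering the pointwise bound via Lebesgue differentiation together with the matching small-ball asymptotics $\mu(B(x_0,s))\sim\mu_k([0,s])$. You instead run the classical Cheeger--Yau parabolic argument directly: the space--time function $q_k^R(T-t,0,r(x))$ is shown to be a subsolution along the Brownian path, Kendall's local-time term is handled by the sign of $\partial_r q_k^R$, and you pass to the limit in the \emph{time} variable $t\to T^-$. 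The trade-off is that your approach requires the radial monotonicity $\partial_r q_k^R(s,0,\cdot)\le 0$---which you correctly flag and which is indeed provable by the maximum principle or by identifying $q_k^R$ with the radial heat kernel of the K\"ahler model---whereas the paper's route through Theorem~\ref{comparison kahler  1d} packages that step inside the cited 1D diffusion comparison. Your version is more self-contained once the monotonicity is granted; the paper's is shorter because it defers the coupling work to \cite{ichihara} and \cite{MR1882015}.
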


\begin{proof}
From theorem \ref{comparison kahler  1d}, one has
\[
  \int_{B(x_0,s)} p^R(t,x_1,y) d\mu(y) \ge \int_0^s
  q_k^R(t,d(x_0,x_1),r)d\mu_k (r).
\]
When $ s\to 0^+$, one has
\[
\mu (B(x_0,s)) \sim \frac{\pi^m}{m!} s^{2m} \sim \mu_k ([0,s]).
\]
On the other hand, from the Lebesgue differentiation theorem one has
\[
\lim_{s \to 0^+}\frac{1}{\mu (B(x_0,s))}  \int_{B(x_0,s)} p^R(t,x_1,y) d\mu(y)= p^R(t,x_1,x_0)=p^R(t,x_0,x_1)
\]
and
\[
\lim_{s \to 0^+} \frac{1}{\mu_k ([0,s])} \int_0^s
  q_k^R(t,d(x_0,x_1),r)d\mu_k (r)=q_k^R(t,d(x_0,x_1),0)=q_k^R(t,0,d(x_0,x_1)).
\]
The conclusion follows.
\end{proof}

\subsubsection{Cheng's estimates for Dirichlet eigenvalues on metric balls}

A nice corollary of the Cheeger-Yau's type heat kernel lower bound is a Cheng's type upper bound for the Dirichlet eigenvalues of Riemannian balls in terms of the eigenvalues of Riemannian balls in the corresponding K\"ahler model.

\begin{proposition}[Cheng's type estimates]
Let $k \in \mathbb{R}$. Assume that $H \ge 4k$ and that $\mathrm{Ric}^\perp \ge (2m-2)k$. Let $R>0$. For $x_0 \in \M$ let $\lambda_1( B_0(x_0,R))$ denote the
  first Dirichlet eigenvalue of the Riemannian ball $B(x_0,R)$ and let $\lambda_1(m,k,R)$ denote the first Dirichlet
  eigenvalue of the operator $L_k$  on the interval $[0,R]$ with Dirichlet boundary condition at
  $R$. Then, for every $x_0 \in \M$ and $R>0$
  \[
    \lambda_1( B(x_0,R)) \le \lambda_1(m, k,R).
  \]
\end{proposition}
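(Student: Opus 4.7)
The plan is to extract the eigenvalue comparison from the Cheeger--Yau type heat kernel lower bound of Theorem \ref{comparison heat kernel} by taking a long-time asymptotic. First, I would evaluate the inequality of Theorem \ref{comparison heat kernel} at $x_1 = x_0$, which gives the pointwise bound
\[
p^R(t, x_0, x_0) \ge q_k^R(t, 0, 0), \qquad t > 0.
\]

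Next, I would use the spectral decompositions on both sides. Since $-\Delta$ with Dirichlet boundary condition on $B(x_0, R)$ is self-adjoint with compact resolvent, it has a purely discrete spectrum $0 < \lambda_1(B(x_0, R)) < \lambda_2 \le \ldots$ with an $L^2(\mu)$-orthonormal basis of eigenfunctions $\{\phi_n\}$, so that
\[
p^R(t, x_0, x_0) = \sum_{n \ge 1} e^{-\lambda_n(B(x_0,R)) t} \phi_n(x_0)^2.
\]
Since the first Dirichlet eigenfunction $\phi_1$ is strictly positive inside $B(x_0, R)$, one obtains the asymptotic
\[
-\frac{1}{t} \log p^R(t, x_0, x_0) \longrightarrow \lambda_1(B(x_0, R)) \quad \text{as } t \to \infty.
\]
An analogous expansion for the one-dimensional model operator $L_k$ (symmetric with respect to $\mu_k$ on $[0, R]$, regular at $0$, Dirichlet at $R$) furnishes an orthonormal basis $\{\psi_n\}$ with $\psi_1(0) > 0$, whence $-\frac{1}{t}\log q_k^R(t, 0, 0) \to \lambda_1(m, k, R)$.

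Finally, taking logarithms in the heat kernel inequality, dividing by $-t$, and sending $t \to \infty$ reverses the inequality into
\[
\lambda_1(B(x_0, R)) \le \lambda_1(m, k, R),
\]
which is the desired Cheng type bound.

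The only technical point to justify carefully is the strict positivity $\psi_1(0) > 0$ for the principal Dirichlet eigenfunction of the model operator $L_k$, despite the singularity of the drift coefficient at $r = 0$. This follows from standard Sturm--Liouville theory applied to the radial operator, once the natural regularity condition at $r = 0$ (inherited from the fact that $L_k$ is the radial projection of a smooth elliptic operator on a rank-one symmetric space) is imposed, so I do not expect a serious obstacle there.
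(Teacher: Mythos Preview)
Your argument is correct and is essentially the same as the paper's: both use the spectral expansion of the Dirichlet heat kernels on each side of the Cheeger--Yau type inequality from Theorem~\ref{comparison heat kernel} and read off the first eigenvalue from the large-time asymptotics. Your write-up is simply a more detailed version (specializing to $x_1=x_0$, invoking positivity of $\phi_1$ and $\psi_1$, and phrasing the limit via $-\tfrac{1}{t}\log$), while the paper compresses this into a single sentence.
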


\begin{proof}
  From spectral theory, one has
  \[
    p^R(t,x_1,y) =\sum_{j=1}^{+\infty} e^{-\lambda _j t} \phi_j (x_1)
    \phi_j(y)
  \]
  where the $\lambda_j$'s are the Dirichlet eigenvalues of
  $B(x_0,R)$ and the $\phi_j$'s the eigenfunctions.  One has a similar spectral expansion for $q_k^R(t,r_0,r)$.  Thus, from Corollary \ref{comparison heat kernel}, when
  $t \to +\infty$ one must have $\lambda_1 \le \tilde{\lambda}_1$.
\end{proof}


\subsection{Comparison theorems on quaternion K\"ahler manifolds}

In the quaternionic K\"ahler framework the comparison theorems of Cheeger-Yau's type and of Cheng's type might be obtained in a similar way as in the K\"ahler case.  The difference is the model diffusion with respect to which the comparison is made. 

Let  $(\M,g,I,J,K)$ be a complete quaternion K\"ahler with quaternionic dimension $m$ and for $k \in \mathbb{R}$ consider the following diffusion operator

\begin{align*}
\tilde L_k=
\begin{cases}
\frac{\partial^2}{\partial r^2}+((4m-4)\sqrt{k} \cot \sqrt{k} r +6\sqrt{k} \cot 2 \sqrt{k}r)\frac{\partial}{\partial r}  & \text{ if } k >0 \\
\frac{\partial^2}{\partial r^2}+\frac{4m-1}{r} \frac{\partial}{\partial r}    &   \text{ if } k =0  \\
\frac{\partial^2}{\partial r^2}+((4m-4) \sqrt{|k|}\coth \sqrt{|k|} r +6 \sqrt{|k|} \coth 2  \sqrt{|k|} r)\frac{\partial}{\partial r}   & \text{ if } k <0
\end{cases}
\end{align*}

 and measure
 
 \begin{align*}
d\tilde \mu_k=
\begin{cases}
 \frac{\pi^{2m}}{4(2m-1)! k^{2m-1/2}} \, (\sin \sqrt{k} r)^{4m-4} \sin (2\sqrt{k}r)^3 \,dr& \text{ if } k >0 \\
2 \frac{\pi^{2m}}{(2m-1)!} \, r^{4m-1} dr    &   \text{ if } k =0  \\
\frac{\pi^m}{(m-1)! |k|^{2m-1/2} } \, (\sinh \sqrt{|k|}  r)^{4m-4} \sinh (2\sqrt{|k|} r)^3 \,dr  & \text{ if } k <0.
\end{cases}
\end{align*}

Note that the operator $\tilde L_k$ is symmetric with respect to the measure $\tilde \mu_k$ and that with the notations of section \ref{summary}, we therefore have
\[
(\tilde L_{-1}, \tilde \mu_{-1}) = ( L_{\mathbb H H^m} , \mu_{\mathbb H H^m}),  \quad (\tilde L_{0}, \tilde \mu_{0}) = ( L_{\mathbb H^m} , \mu_{\mathbb H^m}), \quad (\tilde L_{1}, \tilde \mu_{1}) = ( L_{\mathbb H P^m} , \mu_{\mathbb H P^m}).
\]
As in the K\"ahler case, depending on the sign of $k$, $(\tilde L_k,\tilde \mu_k)$ is obtained from $(\tilde L_1,\tilde \mu_1)$, $(\tilde L_{0}, \tilde \mu_{0})$ or $(\tilde L_{-1}, \tilde \mu_{1})$ by a simple rescaling by $\sqrt{|k|}$

By applying the same methods as before, we obtain the following results.

\begin{theorem}[Cheeger-Yau type lower bound]\label{comparison heat kernel 2}
Let $k \in \mathbb{R}$.  Assume that $Q \ge 12k$ and that $\mathrm{Ric}^\perp \ge (4m-4)k$. Let $R>0$.  Let
$( ( X^R_t )_{t \ge 0} , ( \mathbb P_x )_{x \in B(x_0,R)} )$ be a  Brownian motion on $B(x_0,R)$  with Dirichlet boundary condition.  Let $p^R(t,x,y)$ be its heat kernel with respect to the Riemannian volume measure $\mu$. Let now
$\tilde q_k^R(t,r_1,r_2)$ be the heat kernel with respect to $\tilde \mu_k$  of the diffusion on $[0,R]$ with generator $\tilde L_k$ and Dirichlet boundary condition at $R$. Then, for every 
$t>0$ and $x_1\in B (x_0,R)$
\[
   p^R(t,x_0,x_1)  \ge  \tilde q_k^R(t,0,d(x_0,x_1)).
\]
\end{theorem}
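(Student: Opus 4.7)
The plan is to mimic the Kähler proof of Theorem \ref{comparison heat kernel} verbatim, feeding in the new Laplacian comparison from Theorem \ref{comparison quaternion} in place of Theorem \ref{comparison kahler}. Concretely, for $x_1 \in \M$ I would introduce the one-dimensional model diffusion
\[
\tilde\rho^k_t = d(x_0,x_1) + \int_0^t \left( (4m-4) F(k,\tilde\rho^k_s) + 6 F(k,2\tilde\rho^k_s) \right) ds + \sqrt{2}\,\beta_t,
\]
whose infinitesimal generator is precisely the operator $\tilde L_k$ written above, and denote by $\tilde\tau^k_R$ the first hitting time of $R$ by $\tilde\rho^k$. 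Plugging the estimate $\Delta r \le (4m-4)F(k,r)+6F(k,2r)$ from Theorem \ref{comparison quaternion} into Kendall's Itô formula \eqref{eq:Ito-radial} and then applying Ichihara's one-dimensional stochastic comparison argument (exactly as invoked for Theorem \ref{comparison kahler 1d}) yields the radial comparison
\[
\mathbb{P}_{x_1}\{ d(x_0, X^R_t) < s,\; t \le \tau_R\} \ge \mathbb{P}_{x_1}\{ \tilde\rho^k_t < s,\; t \le \tilde\tau^k_R\}, \qquad 0<s\le R.
\]

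Rewriting both sides as integrals of transition densities with respect to the Riemannian volume $\mu$ and with respect to $\tilde\mu_k$ respectively gives
\[
\int_{B(x_0,s)} p^R(t,x_1,y)\, d\mu(y) \;\ge\; \int_0^s \tilde q_k^R(t,d(x_0,x_1),r)\, d\tilde\mu_k(r).
\]
I would then divide both sides by $\mu(B(x_0,s))$ and $\tilde\mu_k([0,s])$ respectively and let $s \to 0^+$. The matching of normalizations is automatic: since $\M$ has real dimension $4m$,
\[
\mu(B(x_0,s)) \sim \frac{\pi^{2m}}{(2m)!}\, s^{4m} \sim \tilde\mu_k([0,s])
\]
as $s \to 0^+$, because all three radial densities collected in Table \ref{Table 3} behave like $2\frac{\pi^{2m}}{(2m-1)!} r^{4m-1} dr$ near $r=0$, independently of $k$. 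Lebesgue differentiation applied to the continuous functions $y \mapsto p^R(t,x_1,y)$ and $r \mapsto \tilde q_k^R(t,d(x_0,x_1),r)$ at $x_0$ and at $0$ respectively then gives
\[
p^R(t,x_1,x_0) \;\ge\; \tilde q_k^R(t,d(x_0,x_1),0),
\]
and symmetry of both heat kernels in their spatial arguments concludes the proof.

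There is no real obstacle: the structural ingredients (Theorem \ref{comparison quaternion}, Kendall's formula, Ichihara's stochastic comparison, and Lebesgue differentiation) are already in place, and the proof is a pure bookkeeping exercise in which the Kähler data $(2m-2,\,2,\,\pi^m s^{2m}/m!)$ are replaced by the quaternion Kähler data $(4m-4,\,6,\,\pi^{2m}s^{4m}/(2m)!)$. The only point deserving a line of verification is the volume asymptotic above, which follows directly from the explicit form of $d\tilde\mu_k$ and the fact that the pole at $r=0$ of the drift of $\tilde L_k$ has the universal coefficient $4m-1$ encoding $4m$-dimensional Euclidean volume growth.
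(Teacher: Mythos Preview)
Your proposal is correct and is exactly the approach the paper takes: the paper does not write out a separate proof but simply states that ``by applying the same methods as before, we obtain the following results,'' meaning one repeats the proof of Theorem \ref{comparison heat kernel} with Theorem \ref{comparison quaternion} in place of Theorem \ref{comparison kahler} and the quaternionic radial data $(\tilde L_k,\tilde\mu_k)$ in place of $(L_k,\mu_k)$. You have even supplied the one detail the paper leaves implicit, namely the check that $\mu(B(x_0,s))\sim \pi^{2m}s^{4m}/(2m)!\sim \tilde\mu_k([0,s])$ as $s\to 0^+$.
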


\begin{proposition}[Cheng's type estimates]
Let $k \in \mathbb{R}$.  Assume that $Q \ge 12k$ and that $\mathrm{Ric}^\perp \ge (4m-4)k$. Let $R>0$. For $x_0 \in \M$ let $\lambda_1( B_0(x_0,R))$ denote the
  first Dirichlet eigenvalue of the Riemannian ball $B(x_0,R)$ and let $\tilde{\lambda}_1(m,k,R)$ denote the first Dirichlet
  eigenvalue of the operator $\tilde L_k$  on the interval $[0,R]$ with Dirichlet boundary condition at
  $R$. Then, for every $x_0 \in \M$ and $R>0$
  \[
    \lambda_1( B(x_0,R)) \le \tilde{\lambda}_1(m, k,R).
  \]
\end{proposition}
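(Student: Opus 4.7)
The plan is to mimic exactly the argument given for the K\"ahler case, using the quaternion K\"ahler Cheeger--Yau lower bound (Theorem \ref{comparison heat kernel 2}) as the sole analytic input. More precisely, I would start from the spectral decomposition of the Dirichlet heat kernel on the Riemannian ball $B(x_0,R)$, writing
\[
p^R(t,x_0,x_0) \;=\; \sum_{j\ge 1} e^{-\lambda_j t}\,\phi_j(x_0)^2,
\]
where $\{\lambda_j\}$ are the Dirichlet eigenvalues of $\Delta$ on $B(x_0,R)$ and $\{\phi_j\}$ the corresponding $L^2(\mu)$-orthonormal eigenfunctions. The analogous spectral decomposition holds for the one-dimensional model operator $\tilde L_k$ on $[0,R]$ with Dirichlet condition at $R$, giving
\[
\tilde q_k^R(t,0,0) \;=\; \sum_{j\ge 1} e^{-\tilde\lambda_j t}\,\tilde\phi_j(0)^2.
\]

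Next I would apply Theorem \ref{comparison heat kernel 2} with $x_1=x_0$, yielding the pointwise inequality $p^R(t,x_0,x_0) \ge \tilde q_k^R(t,0,0)$ for every $t>0$. Taking logarithms, dividing by $t$, and letting $t\to +\infty$, the asymptotics of both sides are governed by the smallest eigenvalue, so
\[
-\lambda_1(B(x_0,R)) \;=\; \lim_{t\to\infty} \frac{1}{t}\log p^R(t,x_0,x_0) \;\ge\; \lim_{t\to\infty}\frac{1}{t}\log \tilde q_k^R(t,0,0) \;=\; -\tilde\lambda_1(m,k,R),
\]
which rearranges to the desired inequality $\lambda_1(B(x_0,R))\le \tilde\lambda_1(m,k,R)$.

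The only mild subtlety is making sure the first eigenfunctions do not vanish at the evaluation point, so that the leading term in the spectral sum is genuinely of order $e^{-\lambda_1 t}$. On the Riemannian ball this is standard: the first Dirichlet eigenfunction $\phi_1$ is strictly positive in the interior. On the one-dimensional model side, $\tilde\phi_1$ is the ground state of the Sturm--Liouville operator $\tilde L_k$ with Dirichlet condition at $R$ and a regular singular point at $0$ (with $\tilde\mu_k$-integrability at $0$ playing the role of the boundary condition there); it is again strictly positive on $(0,R)$ and has a positive boundary value at $0$, so $\tilde\phi_1(0)>0$.

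I do not expect any genuine obstacle here: once Theorem \ref{comparison heat kernel 2} is in hand, the argument is an entirely formal long-time asymptotic comparison, identical in form to the K\"ahler Cheng-type estimate just proved, the only change being the replacement of $(L_k,\mu_k)$ by $(\tilde L_k,\tilde\mu_k)$. For this reason the proof can be dispatched as ``same as in the K\"ahler case, replacing Theorem \ref{comparison heat kernel} by Theorem \ref{comparison heat kernel 2}.''
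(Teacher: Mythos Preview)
Your proposal is correct and follows exactly the same route as the paper: spectral expansion of both Dirichlet heat kernels, application of the Cheeger--Yau lower bound (Theorem \ref{comparison heat kernel 2}), and comparison of the leading exponential as $t\to\infty$. Your remark on the positivity of the first eigenfunctions at the evaluation points is a welcome clarification that the paper leaves implicit.
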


\bibliographystyle{amsplain}
\bibliography{references}

\end{document}